\newcommand\CC{\mathbbm{C}}
\newcommand\UU{\mathbbm{U}}
\newcommand\cD{\mathcal{D}}
\newcommand\cL{\mathcal{L}}
\newcommand\cR{\mathcal{R}}
\newcommand\cW{\mathcal{W}}
\newcommand\tU{\tilde{U}}
\newcommand\tV{\tilde{V}}
\newcommand\tW{\tilde{W}}
\newcommand\alphaU{\alpha_u} 
\newcommand\alphaV{\alpha_v} 
\newcommand\betaU{\beta_u} 
\newcommand\betaV{\beta_v} 
\newcommand\MU{M_u} 
\newcommand\MV{M_v} 
\newcommand\kU{K_u} 
\newcommand\kV{K_v} 
\newcommand\U{{\cal U}} 
\newcommand\V{{\cal V}}
\newenvironment{proof}
   {\noindent {\bf Proof.}}{\hfill$\Box$}
\newenvironment{block}{\left[ \begin{array}}{\end{array}\right] }
\newtheorem{theorem}{Theorem}
\newtheorem{proposition}{Proposition}
\newtheorem{lemma}{Lemma}
\newtheorem{corollary}{Corollary}
\begin{document}
\begin{frontmatter}
\title{Nudelman interpolation, parametrizations of lossless functions 
 and balanced realizations.} 
\author[1]{Jean-Paul Marmorat}
\author[2]{Martine Olivi}
\address[1]{CMA, BP 93, 
06902 Sophia-Antipolis Cedex, FRANCE, 
marmorat@sophia.inria.fr, phone: 33 4 92 38 79 56,
fax: 33 4 92 38 79 98}
\address[2]{INRIA, BP 93, 
06902 Sophia-Antipolis Cedex, FRANCE, 
olivi@sophia.inria.fr, phone: 33 4 92 38 78 77, 
fax: 33 4 92 38 78 58}

\begin{abstract}
 We investigate the parametrization issue for
 discrete-time stable all-pass multivariable systems by means of a  Schur
 algorithm involving a Nudelman interpolation condition. A recursive 
construction of balanced realizations is associated with it, that possesses a 
very good numerical behavior. Several atlases of charts or families of local
parametrizations are presented and for each  atlas a  chart selection
strategy is  proposed.  The last one can be viewed as  a nice mutual encoding
property of lossless functions and turns out to be very efficient. These parametrizations 
allow  for solving optimization problems  within the fields of system
identification and optimal control.
\end{abstract}

\begin{keyword}
Parametrization, Inner matrices,
Interpolation algorithms,   Scattering parameters, Differential geometric
methods, Discrete-time systems, Multivariables systems. 
\end{keyword}
\end{frontmatter}

\section{Introduction}
Lossless or stable allpass transfer functions play an important role in
system theory mainly due to the Douglas-Shapiro-Shields factorization: any
proper transfer function can be written as the product of a 
lossless function, which includes the dynamics of the system, and an unstable
factor. In many  problems in which a criterion must be optimized over a set of 
functions, the unstable factor of the optimum can be computed from
the lossless one.  This can be done in  rational $L^2$ approximation 
(\cite{F-O98}), system identification (\cite{Bruls}), multi-objective control
(\cite{Scherer00}). These problems can thus be handled by optimization methods
over the class of lossless functions of prescribed degree, or possibly a
specified subclass.  
It is with such applications in mind that we will address the
parametrization issue.

An interesting and unusual approach of this optimization problems is to use 
 the manifold structure of the class of
lossless 
functions of fixed McMillan degree (\cite{ABG})  and
parameters coming from an atlas of charts. An atlas of charts attached with a manifold
is a collection of local coordinate maps (the charts), whose domains cover
the manifold and such that the changes of coordinates are smooth. 
Using such parameters allows to exactly describe the set on which an optimum
is searched. This  ensures that the optimum will be stable and of the
prescribed order. In practice, a search
algorithm can be run through the manifold as a whole, using a local
coordinate map to describe it locally and changing from one
coordinate map to another when necessary. 

In the literature, atlases of charts have been derived 
both from the state-space approach using nice selections and from the
functional approach using interpolation theory and Schur type algorithms. 
A connection between these two approaches  was found in the scalar (or
SISO) case
(\cite{H-P98}) and generalized  to the matrix case 
(\cite{PHO}). In this paper, an atlas is described in which 
balanced realizations can be computed from the Schur parameters.
The computation involves a product of unitary matrices and thus presents a
nice numerical behavior. Moreover, for some particular choices of the
interpolation points and directions, the balanced realizations possess  a
triangular structure which relates to nice selections (\cite{PHO_SSSC04}). 

The natural framework for these studies is that of complex functions. 
However, systems are often real-valued and their transfer
functions $T$ are 
real,  that is, they satisfy the relation $\overline{T(z)}=T(\bar z)$.
Even if the complex case includes the real
case by restriction, a specific treatment is  actually relevant and was the
initial motivation for this work which notably improves \cite{CDC03}.
In rational $L^2$ approximation for example, 
a real function  may have a complex best approximant.
This is the case for  the  function 
$f(z)=1/z^3-1/z$ which  admits three minima: a real  one and two complex
ones, which  achieve  the best relative error.

In this paper, atlases are constructed in which 
lossless functions are represented by balanced realizations built
recursively from interpolation data as in \cite{PHO}. But  instead of the Nevanlinna-Pick
interpolation problem used there we consider here the more
general  Nudelman interpolation problem. 
This very general framework allows to construct several atlases, including
that of  \cite{PHO}, and to describe the subclass of real
functions.  For each particular atlas presented in this
work, we propose  
a simple method to find  an "adapted chart"
for a given lossless function.  This last point, together with their nice
numerical behavior, make   
these parametrizations  an interesting tool for solving the optimization
problems mentioned before.

\section{Preliminaries.}
This paper is concerned with finite dimensional, stable,
discrete-time systems and their transfer functions which happen to be 
rational functions analytic outside the closed unit disk. 
Interpolation theory usually deals with functions that are analytic 
in the open
 unit disk.  To 
relate  these  two situations, we  use the transformation $F\to
F^\sharp$ defined by
\begin{equation} 
\label{Rsharp} 
F^\sharp(z)= F^*(1/ z),~~~F^*(z)=F(\bar z)^*.
\end{equation} 

Let 
\[J=\left[\begin{array}{cc}I_p & 0\\ 0 & - I_p\end{array}\right].\]
A $2p \times 2p$ rational 
matrix function $\Theta(z)$ is called $J$-lossless (or conjugate $J$-inner)
 if, at every point of analyticity $z$ of $\Theta(z)$ it satisfies
\begin{eqnarray}
\label{Jinnerin}
\Theta(z)J\Theta(z)^* &\leq& J,~~~ |z|>1,\\ 
\label{Jinneron}
\Theta(z)J\Theta(z)^* &=& J,~~~ |z|=1.
\end{eqnarray}
The simplest $J$-lossless functions are the constant $J$ unitary
  matrices  $H$ satisfying $H^*JH=J$.

A $p\times p$ rational matrix function $G(z)$ is called lossless or
conjugate inner  (resp. inner), if and only if
\begin{equation}
\label{Inner}
G(z)G(z)^*\leq I_p,~~~|z|>1~ ({\rm resp.~} |z|<1),
\end{equation}
with equality on the circle. The transfer function of a lossless system 
is  a lossless function. A lossless function can have no pole on the unit
circle  and the identity $G^\sharp(z)G(z)= I_p$ for $|z|=1$,  
extends by analytic continuation 
 to all points where both $G(z)$ and  $G^\sharp(z)$ are  analytic.
 Therefore, the function  $G(z)^{-1}$ agrees with $G^\sharp(z)$ and
 is inner.

We denote by $\cL_n^p$ the set of 
$p\times p$ lossless functions of McMillan degree $n$,
by $\cR\cL_n^p$ the subset of real functions 
and by $\UU(p)$ the
set of $p\times p$ constant unitary matrices. The McMillan degree will be
denoted by $\deg$.

An important property of a lossless function is that if 
\[G(z)=C(zI_n-A)^{-1}B+D,\]
is a balanced realization (it always exists, see \cite{Getal}), then the associated {\em
  realization matrix} 
\begin{equation}
\label{realizationmatrix}
R=\left[ \begin{array}{cc}  D & C \\  B & A \end{array}
  \right]
\end{equation}
is {\em unitary}. Lossless
functions can thus be represented by unitary realization matrices.  Conversely,
if the realization matrix associated with a realization of order $n$ of some
$p\times p$ rational function $G(z)$ is unitary, then $G(z)$ is lossless of
McMillan degree less or equal to n. For these questions, we refer the reader to
\cite{PHO} and the bibliography therein.

Along with a  $2p \times 2p$ rational  function
$\Theta(z)$  block-partitioned  as follows 
\begin{equation}
\label{thetablock} 
\Theta(z) = \left[ \begin{array}{cc} \Theta_{11}(z) & \Theta_{12}(z) \\
\Theta_{21}(z) & \Theta_{22}(z) \end{array} \right],
\end{equation}
with each
block of size $p \times p$, we associate the
linear fractional transformation   ${T}_{\Theta}$
which acts  on $p \times p$ rational functions
$F(z)$ as follows:
\begin{equation}
\label{LFT}
  {T}_{\Theta}(F) =
  [\Theta_{11}\,F + \Theta_{12}] [\Theta_{21}\,F + \Theta_{22}]^{-1}.
\end{equation}
For a composition of linear fractional transformations, it holds that
$T_\Theta\circ T_\Psi= T_{\Theta\Psi}$.
Linear fractional transformations occur extensively in representation
formulas for the solution of various interpolation problems (\cite{BGR}). To
adapt the results available in the literature for 
functions analytic in the disk to the case of functions analytic outside, we 
use the  relation
\[Q=T_\Theta(R) \Leftrightarrow Q^\sharp=T_{J_1\Theta J_1}(R^\sharp),
J_1=\left[\begin{array}{cc}0 & I_p\\
I_p & 0 \end{array}\right].\]
In particular, we have the following result, 
stated for inner functions for example in \cite[Lemma 3]{F-O98}:
\begin{theorem}
If $\Theta(z)$ is a  $J$-lossless  matrix function, then the map $T_{\Theta}$
sends every lossless function to a lossless function.
\end{theorem}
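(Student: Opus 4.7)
The plan is to derive a pointwise matrix identity that compares $I - QQ^*$ to $I - FF^*$ up to a conjugation, then read off the two regimes $|z|>1$ and $|z|=1$ directly from the $J$-lossless conditions on $\Theta$.

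First I would verify that the denominator $\Theta_{21}F + \Theta_{22}$ is invertible as a rational matrix function, so that $Q := T_\Theta(F)$ is a genuine rational function. On $|z|=1$ one has $FF^* = I_p$ and $\Theta J \Theta^* = J$; since $\Theta$ is square this forces $\Theta^* J \Theta = J$ on the circle as well. Starting from the block identity
\[
\left[\begin{array}{c} \Theta_{11}F+\Theta_{12} \\ \Theta_{21}F+\Theta_{22} \end{array}\right]
 = \Theta \left[\begin{array}{c} F \\ I \end{array}\right]
\]
and sandwiching $J$, one obtains $(\Theta_{21}F+\Theta_{22})^*(\Theta_{21}F+\Theta_{22}) - (\Theta_{11}F+\Theta_{12})^*(\Theta_{11}F+\Theta_{12}) = I - F^*F$, which vanishes on $|z|=1$. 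Pointwise injectivity of $\Theta_{21}F+\Theta_{22}$ on the circle follows, so its determinant is not identically zero and $Q$ is a well-defined rational function.

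The central step is the observation that the defining relation $Q(\Theta_{21}F+\Theta_{22}) = \Theta_{11}F+\Theta_{12}$ rewrites as the row-factorization
\[
[\,I,\ -Q\,]\,\Theta \;=\; M\,[\,I,\ -F\,], \qquad M := \Theta_{11} - Q\,\Theta_{21}.
\]
Combined with the elementary identity $[\,I,-X\,]\,J\,[\,I,-X\,]^{*} = I - XX^{*}$, applied once directly to $X=Q$ and once, via the factorization, to $X=F$, this yields
\[
I - QQ^{*} \;=\; M\,(I - FF^{*})\,M^{*} \;+\; [\,I,-Q\,]\,(J - \Theta J \Theta^{*})\,[\,I,-Q\,]^{*}.
\]
For $|z|>1$ both summands on the right are positive semidefinite ($F$ lossless gives $I-FF^*\geq 0$, and $\Theta$ being $J$-lossless gives $J-\Theta J\Theta^*\geq 0$), hence $QQ^{*}\leq I_p$; for $|z|=1$ both vanish and $QQ^{*}= I_p$.

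Finally I would check that $Q$ has no poles outside the closed disk. Since $\Theta$ and $F$ are analytic on $\{|z|>1\}$, so are $\Theta_{11}F+\Theta_{12}$ and $\Theta_{21}F+\Theta_{22}$; the only candidate poles of $Q$ there are zeros of $\det(\Theta_{21}F+\Theta_{22})$. The bound $QQ^{*}\leq I_p$ just proved shows $Q$ is locally bounded on its domain of analyticity in $\{|z|>1\}$, and a rational function bounded in a punctured neighborhood of a point is regular there, ruling out any such pole. The main obstacle I expect is not the algebraic identity, which falls out mechanically once the pairing $[I,-Q]\Theta = M[I,-F]$ is spotted, but rather the analytic bookkeeping around invertibility of $\Theta_{21}F+\Theta_{22}$ and the absence of poles of $Q$ in $\{|z|>1\}$, both of which must be in place before the matrix inequality becomes meaningful.
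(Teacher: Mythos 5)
Your proof is correct, and it is worth noting that the paper itself does not prove this statement at all: it simply cites it as Lemma~3 of the reference [F--O98] (and, for the functions actually used later, the $J$-losslessness is verified by the explicit computation of $J-\Phi_{\U,\V}(z)J\Phi_{\U,\V}(\lambda)^*$). So you have supplied a self-contained argument where the paper defers to the literature. Your central identity
\[
I-QQ^{*}=M(I-FF^{*})M^{*}+[\,I,-Q\,](J-\Theta J\Theta^{*})[\,I,-Q\,]^{*},
\qquad M=\Theta_{11}-Q\Theta_{21},
\]
obtained from the row factorization $[\,I,-Q\,]\Theta=M[\,I,-F\,]$, is exactly the standard mechanism behind such results and it correctly delivers contractivity for $|z|>1$ and unitarity on the circle. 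Two small points deserve tightening. First, the step ``pointwise injectivity of $\Theta_{21}F+\Theta_{22}$ on the circle follows'' is too terse: from the vanishing of the difference of Gram matrices you get $\ker(\Theta_{21}F+\Theta_{22})\subseteq\ker(\Theta_{11}F+\Theta_{12})$, and you must then invoke the invertibility of $\Theta$ on the circle (which follows from $\Theta J\Theta^{*}=J$) to conclude that a common kernel vector $v$ forces $[F v;\,v]=0$, hence $v=0$. Second, your final paragraph asserts that $\Theta$ is analytic on $\{|z|>1\}$; this is not implied by the definition of $J$-losslessness, since the inequality $\Theta J\Theta^{*}\leq J$ involves the indefinite matrix $J$ and does not bound $\Theta$, and conjugate $J$-inner functions may indeed have poles there. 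Fortunately this inaccuracy is harmless: the bound $QQ^{*}\leq I_p$ holds at all but finitely many points of $\{|z|>1\}$, and your own observation that a rational function bounded in a punctured neighborhood of a point is regular there already rules out every candidate pole of $Q$, whatever its provenance.
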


\section{Nudelman interpolation  for lossless functions}
The  Nudelman interpolation problem  is to find a $p\times p$  rational
 lossless function $G(z)$ which satisfies an interpolation 
condition of the form 
\begin{equation}
\label{Nudelman}
\frac{1}{2i\pi}\int_{\bf T} G^\sharp(z) U 
\left(z\,I_\delta- W\right)^{-1} dz
=V,
\end{equation} 
where $(U,W)$ is an  observable pair and $W$ is stable ($U$ is
$p\times \delta$ and $W$ is $\delta\times \delta$).  Note that if $W$ is a diagonal matrix, this
problem reduces  to  a Nevanlinna-Pick problem.
 
It is well-known that there exists a rational lossless function $G(z)$ satisfying 
the interpolation condition (\ref{Nudelman}) if and only if the solution 
$P$ of the symmetric Stein equation 
\begin{equation}
\label{Stein}
P-W^*PW=U^*U-V^*V
\end{equation}
is  positive definite \cite[Th.18.5.2]{BGR}.
A triple $(W,U,V)$  such that the solution $P$ of (\ref{Stein}) is  positive
definite,  will be called an {\em admissible Nudelman data set}.
 A  $2p\times 2p$ $J$-lossless function  can then  be 
 built from  $(W,U,V)$:
\begin{equation}
\label{Theta}
\begin{array}{l}
\Theta_{W,U,V}(z)=\\
\left[I_{2p}-(z-1)C(z\,I_\delta- W)^{-1}{P}^{-1}(I_\delta-
W)^{-*}C^*J\right]
\end{array}
\end{equation} 
where $C=\left[\begin{array}{cc}U\\V\end{array}\right]$.

\begin{theorem}
\label{SolNudelman}
Let $(W,U,V)$ be  an admissible Nudelman data set, and let $\Theta=\Theta_{W,U,V} H$,
where $\Theta_{W,U,V}$ is given by (\ref{Theta}) and $H$ is an arbitrary
constant $J$-unitary matrix. For every lossless function
$F(z)$, the lossless function 
\begin{equation}\label{SolEq}
G=T_{\Theta}(F)
\end{equation}
satisfies (\ref{Nudelman}) and $\deg G=\deg F + \delta.$
 Conversely, the set of all  lossless solutions $G(z)$ of
(\ref{Nudelman})  is given by  (\ref{SolEq}) 
where   $F(z)$ is an arbitrary lossless function. 
\end{theorem}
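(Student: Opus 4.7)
The plan is to break the theorem into four tasks: establish that $\Theta$ is $J$-lossless, verify the interpolation condition (\ref{Nudelman}), check the degree formula, and handle the converse.

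First I would show that $\Theta_{W,U,V}$ is $J$-lossless. This is a classical consequence of the admissibility of $(W,U,V)$: the Stein equation (\ref{Stein}) with $P>0$ is exactly the positivity condition that makes the realization implicit in (\ref{Theta}) unitary with respect to the appropriate $J$-indefinite inner product, and then a direct calculation gives $\Theta_{W,U,V}(z)J\Theta_{W,U,V}(z)^{*}\le J$ outside the disk with equality on the circle. Since $H$ is constant $J$-unitary, $\Theta=\Theta_{W,U,V}H$ inherits these properties, so by the unnumbered theorem above $G=T_{\Theta}(F)$ is lossless whenever $F$ is.

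Second I would verify (\ref{Nudelman}). The key observation is that $\Theta_{W,U,V}(z)$ has simple poles exactly at the eigenvalues of $W$ (which lie in the open unit disk since $W$ is stable, so the contour $\mathbf{T}$ encloses them), with residues expressible through $U$, $V$ and $P$. Writing $G^{\sharp}=T_{\Psi}(F^{\sharp})$ for the appropriate transform $\Psi=J_{1}\Theta J_{1}$, I would evaluate the contour integral by residues and use the specific form (\ref{Theta}) to see that the denominator $\Theta_{21}F+\Theta_{22}$ is invertible on and inside $\mathbf{T}$ (this uses that $F$ is lossless, hence contractive, combined with the $J$-lossless bound on $\Theta$), so the integral picks up only the $W$-poles. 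The algebraic identity then collapses to $V$, independently of $H$ and $F$; this is essentially the standard Ball-Gohberg-Rodman representation verified by residue calculation.

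Third, for the degree I would check that the realization of $\Theta_{W,U,V}$ encoded in (\ref{Theta}) is minimal: observability of $(U,W)$ is given, and controllability follows from the Stein equation together with observability and $P>0$. Hence $\deg\Theta=\delta$. The degree additivity $\deg T_{\Theta}(F)=\deg\Theta+\deg F$ reduces to the absence of pole--zero cancellation between $\Theta$ and $F$; the poles of $\Theta$ lie inside the unit disk while $F$ is analytic outside, and the determinant of $\Theta_{21}F+\Theta_{22}$ remains non-singular there for the same contractivity reason used above.

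For the converse, given any lossless solution $G$ of (\ref{Nudelman}), I would define $F=T_{\Theta^{-1}}(G)$ and show $F$ is lossless of degree $\deg G-\delta$. Using $T_{\Theta^{-1}}=T_{\Theta}^{-1}$ via the composition rule $T_\Theta\circ T_\Psi=T_{\Theta\Psi}$, this is a well-defined rational function; what must be checked is that the interpolation condition on $G$ forces cancellation of the $\delta$ unstable poles that $\Theta^{-1}$ would otherwise introduce inside the unit disk. This is exactly the content of the residue calculation done in the forward direction, now read in reverse. Once $F$ is known to be analytic outside the disk, its losslessness follows from $G=T_{\Theta}(F)$ together with the $J$-lossless property of $\Theta$ (reversing the inequality argument). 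The main obstacle, I expect, is precisely this cancellation step: showing that (\ref{Nudelman}) is not only necessary but exactly matches the pole structure introduced by $\Theta^{-1}$, which is where the minimality of the realization of $\Theta_{W,U,V}$ established in the third step is indispensable.
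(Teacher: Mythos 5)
Your proposal and the paper's proof diverge completely in form: the paper's entire proof is a one-line citation --- the theorem is declared to be a particular case of Theorem 18.5.2 of Ball--Gohberg--Rodman, which parametrizes all Schur-class solutions of a Nudelman problem --- whereas you set out to prove the result from scratch. What you have written is, in effect, an outline of the proof of that cited theorem specialized to the lossless case: the $J$-losslessness of $\Theta_{W,U,V}$ from the Stein equation, the residue verification of (\ref{Nudelman}) after passing to the disk via $J_1\Theta J_1$, degree additivity from minimality of the degree-$\delta$ realization of $\Theta_{W,U,V}$, and the converse via $F=T_{\Theta^{-1}}(G)$ with the interpolation condition forcing cancellation of the poles at the spectrum of $W$. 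This is the standard architecture and each piece is in the right place; your route buys self-containedness at the cost of considerable length, while the paper's buys brevity by deferring the genuinely delicate steps to the reference. Two cautions on your sketch. First, the claim that $\Theta_{21}F+\Theta_{22}$ is invertible ``on and inside $\mathbf{T}$'' is misplaced: in this outside-the-disk convention the contractivity argument gives invertibility for $|z|>1$, while inside the disk $\Theta$ itself has poles at the eigenvalues of $W$; what the residue computation actually needs is only that $G^\sharp$ is analytic in the closed disk (automatic since $G^{-1}=G^\sharp$ is inner) together with the algebraic structure of $\Theta$ near $\sigma(W)$. Second, the degree identity $\deg G=\deg F+\delta$ is the hardest part of the theorem and your ``no pole--zero cancellation'' argument as stated does not suffice --- ruling out degree drop in a linear fractional transformation requires the full $J$-inner/coprime-factorization machinery of the cited reference, not merely the disjointness of pole locations. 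Neither point breaks your strategy, but both would need real work in a complete write-up.
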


\begin{proof}
This result is a particular case of   \cite[Th.18.5.2]{BGR}, 
which  describes  all the  Schur functions which are solutions of a Nudelman
interpolation problem.  
\end{proof}

Let $\Lambda$ and $\Pi$ be $p\times p$ unitary matrices. Then the following
important relations are satisfied
\begin{equation}
\label{thetaprop}
\left[
    \begin{array}{cc}
\Lambda & 0 \\ 0 & \Pi 
    \end{array}\right]
\Theta_{W,U,V} 
\left[
    \begin{array}{cc}
\Lambda^* & 0 \\ 0 & \Pi^* 
    \end{array}\right]
=\Theta_{W,\Lambda U,\Pi V}
\end{equation}
\begin{equation}
\label{LFTprop}
 T_{\Theta_{W,\Lambda U,\Pi V}}(\Lambda F(z) \Pi^*)= \Lambda
 T_{\Theta_{W,U,V}}(F(z))\Pi^*
\end{equation}

\section{Balanced realizations}
\label{BalancedRealizations}
The aim of this section is to choose the arbitrary $J$-unitary factor $H$ in
Theorem \ref{SolNudelman},
so that the linear fractional transformation $\widetilde
G=T_{\Theta_{W,U,V}H}(G)$ yields a simple and powerful
construction for  balanced realizations as in (\cite{PHO}).

Let  $\U$ and $\V$ be  $(p+\delta)\times(p+\delta)$ unitary matrices partitioned as
follows: 
\begin{equation}
\label{UVblocks}
\U~=~\left[\begin{array}{cc} \MU &  \alphaU \\
                               \betaU^* & \kU
\end{array}\right],~~~\V~=~\left[\begin{array}{cc} \MV & \alphaV \\
                               \betaV^* & \kV
\end{array}\right],
\end{equation}
where  $\kU$ and $\kV$ are $\delta\times \delta$, $\alphaU$, $\alphaV $, $\betaU$ and
$\betaV$ are $p\times \delta$ and  $\MU$ and $\MV$ are $p\times p$, and put 
\begin{equation}
\label{MAlphaBeta}
M=\left[\begin{array}{cc}\MU &
    0\\0&\MV\end{array}\right],~~{\bf\alpha}=\left[\begin{array}{c} \alphaU\\
    \alphaV \end{array}\right],~~\beta=\left[\begin{array}{c} \betaU\\
    \betaV \end{array}\right].
\end{equation}

\begin{proposition} Let $\U$ and $\V$ be unitary matrices block-partitioned as 
  in (\ref{UVblocks}). Assume that $\kV\,z-\kU$ is
invertible. 
Given a $p\times p$ proper rational transfer function $G(z)$ and a minimal realization
$G(z)=D+C(zI_k-A)^{-1}B$, the formula
\begin{equation}
\label{statespacerecursion2}
\begin{block}{cc}\widetilde D & \widetilde C\\
 \widetilde B & \widetilde A\end{block}
 = \left[ \begin{array}{cc} \U & 0 \\ 0 & I_{k} \end{array} \right]
  \left[ \begin{array}{ccc} D & 0 & C\\ 0 & I_\delta & 0  \\ B &0 & A\end{array}
  \right] \left[ \begin{array}{cc} \V^* & 0 \\ 0 & I_{k} \end{array}
  \right],
\end{equation}
in which $D$, $\widetilde D$ are $p\times p$,  $A$ 
is $k\times k$, $\widetilde A$ is  $(\delta+k)\times (\delta+k)$,
defines a mapping
\[G(z)\to \widetilde
G(z)=\widetilde D+\widetilde C(zI_{\delta+k}-\widetilde A)^{-1}\widetilde B. \]
This mapping coincides with the linear fractional transformation
$\widetilde G
={T}_{\Phi_{\U,\V}}(G),$
associated with the $2p\times 2p$  $J$-lossless 
function 
\begin{equation}
\label{PhiUV}
\Phi_{\U,\V}(z)=M+
\alpha(\kV\,z-\kU)^{-1}\beta^*J\left[\begin{array}{cc}I_p&0\\0&z\,I_p
\end{array}\right].
\end{equation} 
\end{proposition}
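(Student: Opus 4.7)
The plan is to compute both expressions appearing in the claimed identity $\widetilde G=T_{\Phi_{\U,\V}}(G)$ independently and show that they coincide.

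First, I would carry out the triple matrix product on the right-hand side of (\ref{statespacerecursion2}) block by block, using the partitions (\ref{UVblocks}). A direct block-multiplication yields $\widetilde D=\MU D\MV^*+\alphaU\alphaV^*$ together with
\[\widetilde C=[\MU D\betaV+\alphaU\kV^*,\ \MU C],\quad \widetilde B=\left[\begin{array}{c}\betaU^* D\MV^*+\kU\alphaV^*\\ B\MV^*\end{array}\right],\]
\[\widetilde A=\left[\begin{array}{cc}\kU\kV^*+\betaU^* D\betaV & \betaU^* C\\ B\betaV & A\end{array}\right].\]

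Next, I would repackage the resulting state equations into a feedback form. Splitting the state as $(x_1,x_2)$ with $x_1$ of size $\delta$ and $x_2$ of size $k$, and introducing auxiliary vectors $\xi=\betaV x_1+\MV^* u$ and $\eta=\kV^* x_1+\alphaV^* u$, substitution into $zx=\widetilde A x+\widetilde B u$ and $y=\widetilde C x+\widetilde D u$ makes the $x_2$-component collapse via $G(z)\xi=D\xi+C(zI_k-A)^{-1}B\xi$, so that the remaining input/output relation takes the compact form
\[\left[\begin{array}{c} y\\ zx_1\end{array}\right]=\U\left[\begin{array}{cc} G(z) & 0\\ 0 & I_\delta\end{array}\right]\V^*\left[\begin{array}{c} u\\ x_1\end{array}\right].\]
Writing the $p\times p$, $p\times\delta$, $\delta\times p$ and $\delta\times\delta$ blocks of $\Psi(z):=\U\,\mathrm{diag}(G(z),I_\delta)\,\V^*$ as $\Psi_{11},\Psi_{12},\Psi_{21},\Psi_{22}$ and eliminating $x_1$ from the second block row gives the Redheffer-type expression $\widetilde G=\Psi_{11}+\Psi_{12}(zI_\delta-\Psi_{22})^{-1}\Psi_{21}$.

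It remains to match this with $T_{\Phi_{\U,\V}}(G)$. Expanding the LFT using the block decomposition of $\Phi_{\U,\V}(z)$ in (\ref{PhiUV}) yields
\[T_{\Phi_{\U,\V}}(G)=(\MU G+\alphaU X)(\MV+\alphaV X)^{-1},\qquad X=(\kV z-\kU)^{-1}(\betaU^* G-z\betaV^*).\]
The identification of this LFT form with the Redheffer form $\Psi_{11}+\Psi_{12}(zI_\delta-\Psi_{22})^{-1}\Psi_{21}$ is where I expect the main difficulty to lie: it amounts to a Schur-complement manipulation that must exhibit $(\kV z-\kU)^{-1}$ as an explicit factor inside $(zI_\delta-\Psi_{22})^{-1}$, using the unitarity relations derived from $\U\U^*=\V\V^*=I$ (for instance $\alphaV\kV^*=-\MV\betaV$, $\betaV^*\betaV+\kV\kV^*=I_\delta$, and their $\U$-analogues) to rearrange the surviving $\kU,\kV,\betaU,\betaV$ factors. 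The hypothesis that $\kV z-\kU$ is invertible is exactly what is needed to justify this rearrangement and to guarantee that $\Phi_{21}G+\Phi_{22}$ is invertible as a rational matrix function; once the algebraic identification is carried out, the claim follows.
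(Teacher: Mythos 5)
Your block computation of $(\widetilde A,\widetilde B,\widetilde C,\widetilde D)$ is correct, and so is the reduction of the recursion to the feedback relation $(y,\,zx_1)=\U\,\mathrm{diag}(G(z),I_\delta)\,\V^*(u,\,x_1)$ and hence to the Redheffer form $\widetilde G=\Psi_{11}+\Psi_{12}(zI_\delta-\Psi_{22})^{-1}\Psi_{21}$. But the step you defer --- identifying this expression with $T_{\Phi_{\U,\V}}(G)$ --- is the entire content of the proposition, and you only announce it (``this is where I expect the main difficulty to lie'') without carrying it out; as written the argument is incomplete precisely where it matters. The detour through the Redheffer form also makes the remaining step look harder than it is. From your own relation it suffices to set $(\xi,\eta)=\V^*(u,x_1)$, so that $u=\MV\xi+\alphaV\eta$, $x_1=\betaV^*\xi+\kV\eta$, $y=\MU G\xi+\alphaU\eta$ and $zx_1=\betaU^*G\xi+\kU\eta$; comparing $z(\betaV^*\xi+\kV\eta)$ with $\betaU^*G\xi+\kU\eta$ gives $(\kV z-\kU)\eta=(\betaU^*G-z\betaV^*)\xi$, i.e.\ $\eta=X\xi$ with exactly your $X$, whence $y=(\MU G+\alphaU X)\xi$, $u=(\MV+\alphaV X)\xi$, and the LFT identity follows with no appeal to the unitarity relations you invoke (only invertibility of $\V$ and of $\kV z-\kU$ is used). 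A second omission: the statement calls $\Phi_{\U,\V}$ a $J$-lossless function, and your proposal never verifies this; the paper does so via the identity $J-\Phi_{\U,\V}(z)J\Phi_{\U,\V}(\lambda)^*=(1-\bar\lambda z)\,\alpha(\kV z-\kU)^{-1}(\kV\lambda-\kU)^{-*}\alpha^*$.

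For comparison, the paper takes a different route: it represents $\widetilde G(z)^{-1}$ as the upper-left block of $\Gamma(z)^{-1}$, where $\Gamma(z)$ is built from $\widetilde D$, $\widetilde C$, $\widetilde B$ and $\widetilde A-zI_{\delta+k}$, factors $\Gamma(z)=\Gamma_0(z)\,\mathrm{diag}(\V^*,I_k)$, and obtains $\widetilde G^{-1}=(\Phi_{21}G+\Phi_{22})(\Phi_{11}G+\Phi_{12})^{-1}$ from two Schur-complement computations using the block-inversion formula. Your scattering/feedback viewpoint is a legitimate and arguably more transparent alternative, but it only becomes a proof once the elimination sketched above is actually written down.
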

\begin{proof}
The case $\delta=1$ has been studied in \cite{PHO}. It is easily verified that
(\ref{statespacerecursion2}) defines a mapping since  
the function $\widetilde G(z)$ does not depend on the choice of the minimal
realization $(A,B,C,D)$ of $G(z)$.\\
We shall use a well-known formula for  the inverse of 
a block matrix \cite[sec.0.2]{Dym}. Assuming that the block  $d$ is invertible, 
the inverse of a block matrix is given by
\begin{eqnarray}
\label{InverseMatrix}
\lefteqn{
\begin{block}{cc}a & b\\c &
  d \end{block}^{-1}} \\&&=
\begin{block}{cc}(a^\times)^{-1} & -(a^\times)^{-1}b d^{-1}\\
- d^{-1} c (a^\times)^{-1} & [d^{-1}+d^{-1}c(a^\times)^{-1}bd^{-1}]
\end{block}\nonumber
\end{eqnarray}
where $a^\times=a-bd^{-1}c$ is known as the Schur complement of $a$.
In particular, if 
\[\Gamma(z)= \begin{block}{cc}\widetilde D & \widetilde C\\
 \widetilde B & (\widetilde A-z I_{\delta+k})\end{block}\]
then
$\widetilde G(z)^{-1}=\begin{block}{cc}I_p & 0\end{block}
\Gamma(z)^{-1}\begin{block}{c}I_p 
  \\ 0\end{block}. $
By (\ref{statespacerecursion2})
\begin{eqnarray*}
\Gamma(z)
&=&\left[ \begin{array}{cc} \U & 0 \\ 0 & I_{k} \end{array}
\right] 
 \left[ \begin{array}{ccc} D & 0 & C\\ 0 & I_\delta & 0  \\ B &0 &
    A\end{array} \right]
  \left[ \begin{array}{cc} \V^* & 0 \\ 0 & I_{k} \end{array}
  \right]-\begin{block}{cc}0 & 0\\
0 & z I_{\delta+k}\end{block}\\
&=&\Gamma_0(z)\left[ \begin{array}{cc} \V^* & 0 \\ 0 & I_{k}
\end{array} 
  \right],
\end{eqnarray*}
\[\Gamma_0(z)=
 \begin{block}{ccc}
M_u D & \alpha_u & M_u C\\
\beta_u^* D - z \beta_v^* & \kU - z \kV & \beta_u^*C\\
B & 0 & A-z I_k \end{block}.\]

The block matrix 
\[d=\begin{block}{cc}
\kU - z \kV & \beta_u^*C\\
 0 & A-z I_k \end{block}\]
is invertible and the Schur complement of $M_u D$ can be computed as
\[\begin{array}{l}
M_uD-\begin{block}{cc}\alpha_u & M_uC\end{block}d^{-1}\begin{block}{c}
\beta_u^*D-z\beta_v^*\\B \end{block}\\
=\Phi_{11}(z) G(z) + \Phi_{12}(z),
\end{array}\]
where $\Phi_{11}(z)=M_u-\alpha_u(\kU-\kV\,z)^{-1}\beta_u^*$ and
$\Phi_{12}(z)=+\alpha_u(\kU -\kV\,z)^{-1}\beta_v^* z $ are precisely the blocks of
the function defined by (\ref{PhiUV}).
 Still using  (\ref{InverseMatrix}) to compute $\Gamma_0(z)^{-1}$,
we get
\[\widetilde G(z)^{-1}=\begin{block}{ccc} M_v & \alpha_v & 0\end{block}
\begin{block}{c}(\Phi_{11}\,G+\Phi_{12})^{-1}\\
-d^{-1}c (\Phi_{11}\,G+\Phi_{12})^{-1}\end{block},\]
which gives
\[\widetilde G(z)^{-1}=(\Phi_{21}\,G+\Phi_{22})(\Phi_{11}\,G+\Phi_{12})^{-1},\]
or equivalently $\widetilde G=T_{\Phi_{\U,\V}}(G)$.
It can be easily established that
\begin{eqnarray*}
\lefteqn{J-\Phi_{\U,\V}(z)J\Phi_{\U,\V}(\lambda)^*}\\
&=&(1-\bar\lambda\,z)\alpha(\kV\,z-\kU)^{-1}
(\kV\,\lambda -\kU)^{-*}\alpha^*,
\end{eqnarray*}
and thus $\Phi_{\U,\V}$ is $J$-lossless.
\end{proof}

{\bf Remark.} A state-space formula of the form (\ref{statespacerecursion2}) associated
with some  linear fractional transformation has been used in \cite{Horiguchi}
to describe all the positive real functions which interpolate given
input-output characteristics. 

\begin{proposition}
Let $(W,U,V)$ be some admissible Nudelman data set. There exist unitary
$(p+\delta)\times(p+\delta)$   matrices  $\U$ and $\V$ and a $2p\times 2p$ constant
$J$-unitary matrix $H_{\U,\V}$  such that
\begin{equation}
\label{ThetaPhi} 
\Theta_{W,U,V}\,H_{\U,\V}=\Phi_{ \U, \V}
\end{equation}
\end{proposition}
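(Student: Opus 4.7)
The strategy is to construct the unitary matrices $\U$ and $\V$ explicitly from the Nudelman data $(W,U,V)$ so that $\Phi_{\U,\V}$ carries the same interpolation content as $\Theta_{W,U,V}$, and then to identify $H_{\U,\V}$ via the normalization $\Theta_{W,U,V}(1)=I_{2p}$.

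First I would factor the positive Stein solution as $P=L^*L$ and set $\tW = L W L^{-1}$, $\tU = U L^{-1}$, $\tV = V L^{-1}$. The Stein equation (\ref{Stein}) then rewrites as
\[
\tU^*\tU + \tW^*\tW \;=\; I_\delta + \tV^*\tV.
\]
Letting $K$ be the positive square root of this common $\delta\times\delta$ matrix, both
\[
\left[\begin{array}{c}\tU K^{-1}\\ \tW K^{-1}\end{array}\right]\quad\text{and}\quad
\left[\begin{array}{c}\tV K^{-1}\\ K^{-1}\end{array}\right]
\]
are isometries from $\CC^\delta$ to $\CC^{p+\delta}$, and can be completed to $(p+\delta)\times(p+\delta)$ unitary matrices $\U$ and $\V$ with the block form (\ref{UVblocks}), giving $\kU = \tW K^{-1}$, $\alphaU=\tU K^{-1}$, $\kV = K^{-1}$, $\alphaV = \tV K^{-1}$. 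The remaining first $p$ columns of each (the blocks $\MU,\betaU$ and $\MV,\betaV$) are free up to an orthonormal choice of basis of the orthogonal complement, and this freedom is precisely what $H_{\U,\V}$ will absorb.

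With these choices, a direct calculation gives $\kV z-\kU = (zI_\delta - \tW)K^{-1}$, so $\kV^{-1}\kU$ is similar to $W$ and in particular stable; hence the hypothesis of the previous proposition is met and $\Phi_{\U,\V}$ is $J$-lossless of McMillan degree $\delta$ (observability of $(U,W)$ transfers to the realization underlying (\ref{statespacerecursion2})). The key remaining step is to verify that the LFT $T_{\Phi_{\U,\V}}$ parametrizes the same family of lossless functions as $T_{\Theta_{W,U,V}}$, i.e.\ that for every lossless $F$ the image $T_{\Phi_{\U,\V}}(F)$ satisfies (\ref{Nudelman}) with the original data $(W,U,V)$. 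Granting this, Theorem~\ref{SolNudelman} produces a constant $J$-unitary $H_{\U,\V}$ with $\Phi_{\U,\V} = \Theta_{W,U,V}\,H_{\U,\V}$, and evaluating at $z=1$ (where $\Theta_{W,U,V}(1)=I_{2p}$) identifies $H_{\U,\V}=\Phi_{\U,\V}(1)$, which is automatically $J$-unitary.

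The main obstacle is this interpolation check. It reduces to a residue computation for $\Phi_{\U,\V}^\sharp$ integrated against the resolvent kernel $U(zI_\delta-W)^{-1}$, where the only algebraic facts needed are the Stein equation (\ref{Stein}) and the isometry identities for the columns of $\U$ and $\V$. The delicate part is keeping track of the $J$-signs and of the transformation $F\mapsto F^\sharp$ across $|z|=1$; the built-in similarity $\kV^{-1}\kU\sim W$ forces the pole structures of $\Phi_{\U,\V}$ and $\Theta_{W,U,V}$ to match, which is what makes the residue calculation collapse cleanly to $V$.
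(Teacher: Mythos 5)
Your construction of $\U$ and $\V$ is essentially the paper's own: factoring $P=L^*L$, passing to $\tU=UL^{-1}$, $\tW=LWL^{-1}$, $\tV=VL^{-1}$, rewriting the Stein equation as $\tU^*\tU+\tW^*\tW=I_\delta+\tV^*\tV$, normalizing by $K^{-1}=(I_\delta+\tV^*\tV)^{-1/2}$ and completing the resulting isometries to unitary matrices is exactly what happens around (\ref{kveq})--(\ref{UVWtilde}). The gap is in how you close the argument. You propose to show that $T_{\Phi_{\U,\V}}(F)$ satisfies (\ref{Nudelman}) for every lossless $F$ and then to ``grant'' that Theorem~\ref{SolNudelman} yields a constant $J$-unitary factor. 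First, you never carry out that interpolation check --- you explicitly label the residue computation ``the main obstacle'' and ``the delicate part'' and leave it there, so the central step of the proof is missing. Second, even if you did it, Theorem~\ref{SolNudelman} as stated only describes the solution set of (\ref{Nudelman}); to deduce $\Phi_{\U,\V}=\Theta_{W,U,V}H$ from the inclusion of $\{T_{\Phi_{\U,\V}}(F)\}$ in that set you would additionally need that a $J$-lossless function of degree $\delta$ is determined, up to a right constant $J$-unitary factor, by its LFT action on lossless functions. That is true but is a separate (nontrivial) lemma you neither state nor prove.

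The irony is that your own choices make the direct verification a two-line computation, which is how the paper proceeds. From $\kU=\tW K^{-1}$, $\alpha=\left[\begin{array}{c}\tU\\ \tV\end{array}\right]K^{-1}$ and $\kV=K^{-1}$ you get
\[
\alpha(\kV z-\kU)^{-1}=\alpha\kV^{-1}\bigl(zI_\delta-\kU\kV^{-1}\bigr)^{-1}
=CL^{-1}\bigl(zI_\delta-LWL^{-1}\bigr)^{-1}=C(zI_\delta-W)^{-1}L^{-1},
\]
with $C=\left[\begin{array}{c}U\\ V\end{array}\right]$, and hence, using $P=L^*L$,
\[
I_{2p}-(z-1)\alpha(\kV z-\kU)^{-1}(\kV-\kU)^{-*}\alpha^*J
= I_{2p}-(z-1)C(zI_\delta-W)^{-1}P^{-1}(I_\delta-W)^{-*}C^*J=\Theta_{W,U,V}(z).
\]
So once you check (using unitarity of $\U$ and $\V$) that $\Phi_{\U,\V}(z)\Phi_{\U,\V}(1)^{-1}$ equals the left-hand side above, setting $H_{\U,\V}=\Phi_{\U,\V}(1)$ finishes the proof without any appeal to the interpolation problem. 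Replace your deferred residue argument by this identity and the proof is complete and matches the paper's.
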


\begin{proof}
If (\ref{ThetaPhi}) is satisfied,  since $\Theta_{W,U,V}(1)=I_{2p}$, 
the matrix $H_{\U,\V}$ must be given by 
 \begin{equation}
\label{HWUV}
H_{\U,\V}=M+
{\bf\alpha}
(\kV-\kU)^{-1}\beta^*J.
\end{equation}
Moreover, the function $\Phi_{\U,\V}$ cannot have a
pole on the circle and  can be rewritten 
\[
\begin{array}{l}
\Phi_{\U,\V}(z)H_{\U,\V}^{-1}=\\
\left[I_{2p}- (z-1)\,
{\bf\alpha}
(\kV\,z-\kU)^{-1}
( \kV- \kU)^{-*}{\bf\alpha}^*J\right].
\end{array}\]
The representation (\ref{Theta}) of a $J$-lossless function being  unique up
to a similarity transformation (\cite{BGR}), there must exist a transformation $T$ such that 
$P= T^*T$, $\kU \kV^{-1}=T W T^{-1}$, $\alpha \kV^{-1}= C T^{-1}$. 
 The matrix $T$ is thus a  square root of $P$.   
Since the  matrix $\V$  must be unitary, we must have
 $\alphaV^*\alphaV+\kV^*\kV=I_\delta$, or else
\begin{equation}
\label{kveq}
(T^{-*}V^*V T^{-1}+I_\delta)^{-1}=\kV\kV^*.
\end{equation}
The matrix  $T^{-*}V^*V T^{-1}+I_\delta$ being
positive definite, this equation has  solutions  and $\kV$ being one of
these, 
we can set
\begin{equation}
\label{CUCV}
\left\{
\begin{array}{rcl}
\alphaU&=& \tilde U \kV\\
\kU&=& \tilde W \kV\\
\alphaV&=&  \tilde V \kV
\end{array}\right.,
\end{equation}
in which
\begin{equation}
\label{UVWtilde}
\left\{\begin{array}{rcl}
\tilde U&=&U T^{-1} \\
\tilde W&=&T W T^{-1}\\
\tilde V&=&V T^{-1}
\end{array}\right. .
\end{equation}
These definitions imply that $\alphaU^*\alphaU+\kU^*\kU=I_\delta$
as required and  the columns 
 $\left[\begin{array}{c}\alphaU\\  
    \kU\end{array}\right]$ and $\left[\begin{array}{c}\alphaV\\  
    \kV\end{array}\right]$ can be completed into unitary matrices $\U$ and
$\V$. The columns $\left[\begin{array}{c}\MU\\  
    \betaU^*\end{array}\right]$ and $\left[\begin{array}{c}\MV\\  
    \betaV^*\end{array}\right]$ can be  determined up to some right $p\times p$
unitary matrices.
\end{proof}

\section{Explicit formulas for $\U$ and $\V$.}
\label{ExplicitFormulas}
An observable pair
$(U,W)$ such that $W$ is stable is  called output
normal if it satisfies 
\begin{equation}
\label{OutputNormal}
U^*U+W^*W=I_\delta.
\end{equation}
Note that two equivalent triples $(W,U,V)$ and $(TWT^{-1},UT^{-1},VT^{-1})$
give the same interpolation condition (\ref{Nudelman}), so that we can assume
that the pair $(U,W)$  in (\ref{Nudelman}) is output normal. From now on,
this normalization condition will be imposed to the admissible Nudelman data sets. 
 Explicit formulas for $\U$ and $\V$ can then be given  which ensure the smoothness 
of our parametrization. They  have been used for
implementation (see section \ref{atlasPaulo}).

It has been proved that the second block columns of $\U$ and $\V$ 
are given by (\ref{CUCV}) and
(\ref{UVWtilde}) in which $T$ is a square root of $P$ (the solution to
(\ref{Stein})) and $\kV$ a solution to (\ref{kveq}). We {\em choose} 
the {\em uniquely determined Hermitian positive square roots}
$T=P^{1/2}$ and 
$\kV=(I_\delta+\tilde V^*\tilde V)^{-1/2}$. 
It remains to specify the completion  of these block columns into unitary
matrices.  
The matrix $\MV$ satisfies
\[\MV\MV^*= I_p-\tilde V (I_\delta+\tilde V^*\tilde V)^{-1}\tilde V^*,\]
and it is easily seen that 
\[I_p-\tilde V(I_\delta+\tilde V^*\tilde V)^{-1}\tilde V^*=(I_p+\tilde V\tilde
V^*)^{-1}\] 
is positive definite. Thus we can choose
\[\MV =(I_p+\tilde V\tilde V^*)^{-1/2},\]
 and thus $\beta_V^*= -\tilde V^*(I_p+\tilde V\tilde V^*)^{-1/2}$,
so that we can set
\begin{equation}
\label{matV}
\V=\left[\begin{array}{cc}
(I_p+\tilde V\tilde V^*)^{-1/2}
& \tilde V(I_\delta+\tilde V^*\tilde V)^{-1/2} \\
-\tilde V^*(I_p+\tilde V\tilde V^*)^{-1/2}
& (I_\delta+\tilde V^*\tilde V)^{-1/2} 
\end{array}\right].
\end{equation} 
 The construction of a matrix $\U$ is more involved since the matrix
$I_p-\alphaU\alphaU^*$ may fail to be positive definite. This is the 
case for example when $W$ is the zero matrix, then $\alphaU=U$ and 
$I_p-UU^*$ is not invertible.  
However, when $V$ is zero, since $W$ is stable, $I_\delta-W^*$ is invertible and
there is a simple way to construct a unitary matrix 
\begin{equation}
\label{U0}
 \U_0=\left[\begin{array}{cr}
X & U\\
Y & W
\end{array}
\right], 
\end{equation}
\begin{equation}
\label{XY}
\left\{\begin{array}{rcl} X&=& I_p-U(I_\delta-W^*)^{-1}U^*,\\
 Y&=&(I_\delta-W)(I_\delta-W^*)^{-1}U^*.\end{array}\right.
\end{equation}
Consider the  matrix 
\[
\begin{block}{cc} 
I_p & 0\\
0 & T
\end{block}
\U_0
\begin{block}{cc} 
I_p & 0\\
0 & T^{-1}
\end{block}= \begin{block}{cc}
X & \tilde U\\
T Y & \tilde W
\end{block}.
\]
The problem is now to find  a right factor of the form 
$\begin{block}{cc} \star & 0\\ \star & \kV\end{block}$
which makes it into a unitary matrix.
Let
\[\begin{block}{cc}
N & L^*\\
L & K 
\end{block}=\begin{block}{cc}
X & \tilde U\\
T Y & \tilde W
\end{block}^*\begin{block}{cc}
X & \tilde U\\
T Y & \tilde W
\end{block}.\]
 A classical method consists of  writing a
Cholesky factorization using the following well-known factorization of a
$(p+\delta)\times (p+\delta)$ block matrix 
\begin{eqnarray*}
\begin{block}{cc}
N & L^*\\
L & K 
\end{block}
&=&\begin{block}{cc}
I_p & L^*K^{-1}\\
0 & I_\delta
\end{block}
\begin{block}{cc}
Z^{-1} & 0\\
0 & K
\end{block}
\begin{block}{cc}
I_p & 0\\
 K^{-1} L & I_\delta
\end{block}\\
&=&
\begin{block}{cc}
Z^{-1/2} & 0\\
 K^{-1/2} L & K^{1/2}
\end{block}^*
\begin{block}{cc}
Z^{-1/2} & 0\\
 K^{-1/2} L & K^{1/2}
\end{block}
\end{eqnarray*}
where $Z=(N-L^*K^{-1}L)^{-1}$. 
By (\ref{InverseMatrix}), $Z$ is the left upper block of 
\[\begin{block}{cc}
N & L^*\\
L & K 
\end{block}^{-1}=
\begin{block}{cc}
I_p & 0\\
0 & T
\end{block}
\U_0^*
\begin{block}{cc}
I_p & 0\\
0 & P^{-1}
\end{block}
\U_0
\begin{block}{cc}
I_p & 0\\
0 & T^*
\end{block}\]
and can be computed as
\begin{equation}
\label{Z}
Z=X^*X+Y^*P^{-1}Y.
\end{equation}
The matrices $L$ and $K$ are given by
\begin{eqnarray}
\label{L}
L&=& \tU^*X+\tW^*T Y\\
\label{K}
K&=& \tU^*\tU+\tW^*\tW=I_\delta+\tV^*\tV.
\end{eqnarray}
 Note that the matrices $Z$ and $K$ are actually positive
definite and that $K^{-1/2}=\kV$ as desired. Thus, we can set
\begin{equation}
\label{matU}
\U=\begin{block}{cc}
X & \tilde U\\
T Y & \tilde W 
\end{block}
\begin{block}{cc}
Z^{1/2} & 0 \\
-K^{-1}L Z^{1/2} & K^{-1/2}
\end{block}
\end{equation} 

This proves the following result:
\begin{proposition}
\label{tau}
Let $(W,U,V)$ be some admissible Nudelman data set satisfying
(\ref{OutputNormal}).  
Define the map $\tau:~(W,U,V)\to(\U,\V),$
where $\U$ and $\V$ are the unitary matrices 
given by (\ref{matU}) and (\ref{matV}), where $\widetilde U$, $\widetilde
V$, $\widetilde W$ are given by (\ref{UVWtilde}),  $K, L, Z$  by 
(\ref{K}),(\ref{L}),(\ref{Z}) and $X, Y$ by (\ref{XY}), in all of which
 $T$ is the positive square root of $P$, the solution to 
 (\ref{Stein}).\\ 
Then, the $J$-lossless function
\begin{equation}
\label{ThetaHat}
\hat\Theta_{W,U,V}=\Theta_{W,U,V} H_{\U,\V},
\end{equation}
$H_{\U,\V}$ being given by (\ref{HWUV}), coincides with $\Phi_{\U,\V}$. 
\end{proposition}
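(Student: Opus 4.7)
The plan is to verify that the explicit $\U$ and $\V$ given by (\ref{matU}) and (\ref{matV}) satisfy the hypotheses of the preceding proposition: namely, both are unitary, and their second block columns take the form required by (\ref{CUCV})--(\ref{UVWtilde}) with the specific positive square roots $T = P^{1/2}$ and $\kV = (I_\delta + \tV^*\tV)^{-1/2}$. Once this is established, the identity $\Theta_{W,U,V}H_{\U,\V} = \Phi_{\U,\V}$ is precisely the content of (\ref{ThetaPhi}).

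The verification for $\V$ is a direct computation. Using the push-through identity $\tV(I_\delta + \tV^*\tV)^{-1} = (I_p + \tV\tV^*)^{-1}\tV$ and the fact that each positive square root commutes with its base, one checks orthonormality of the two block columns of (\ref{matV}). The second block column is then $\tV\kV$ stacked over $\kV$, which is $\alphaV$ over $\kV$, matching (\ref{CUCV}).

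For $\U$ the argument proceeds in two stages. First I would check that the auxiliary matrix $\U_0$ of (\ref{U0})--(\ref{XY}) is unitary: $I_\delta - W$ is invertible because $W$ is stable, and $\U_0^*\U_0 = I_{p+\delta}$ reduces to algebraic manipulations that close using only the output-normal relation (\ref{OutputNormal}). Conjugating by $\mathrm{diag}(I_p, T)$ then installs the desired second block column formed from $\tU$ and $\tW$ but breaks unitarity; the right factor in (\ref{matU}) is the standard Cholesky-style repair, whose inverse squared is exactly the Gram matrix of the columns with blocks $N, L, K$ as named in the excerpt. The key identity is that the Schur complement $(N - L^*K^{-1}L)^{-1}$ simplifies to $Z = X^*X + Y^*P^{-1}Y$. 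Applying (\ref{InverseMatrix}) to invert the Gram matrix and recognising the upper-left block of $\mathrm{diag}(I_p, T)\U_0^*\mathrm{diag}(I_p, P^{-1})\U_0\mathrm{diag}(I_p, T^*)$ yields the formula (\ref{Z}), using unitarity of $\U_0$ to collapse the middle factor.

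Since $K = I_\delta + \tV^*\tV$ by (\ref{K}), the Cholesky factor produces the same $\kV$ as was chosen for $\V$, so the second block columns of $\U$ and $\V$ are mutually consistent and jointly satisfy (\ref{CUCV}). The preceding proposition then delivers $\hat\Theta_{W,U,V} = \Phi_{\U,\V}$, and $\tau$ is well-defined because each step of the construction selects a unique positive square root. The main obstacle I expect is the bookkeeping around the two Schur complements: verifying $\U_0^*\U_0 = I_{p+\delta}$ by hand, where the cancellations between $U^*U$, $W^*W$ and the factor $(I_\delta - W^*)^{-1}$ must close using (\ref{OutputNormal}), and then showing that the Gram-matrix Schur complement reduces to the clean formula (\ref{Z}) that is later needed to establish smoothness of $\tau$.
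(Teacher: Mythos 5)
Your proposal takes essentially the same route as the paper, whose proof of this proposition is precisely the derivation preceding it: the direct completion $\MV=(I_p+\tV\tV^*)^{-1/2}$ for $\V$, the unitary matrix $\U_0$ of (\ref{U0})--(\ref{XY}) conjugated by $\mathrm{diag}(I_p,T)$ and repaired by the Cholesky-type right factor for $\U$, with $Z$ recognised as the upper-left block of $\mathrm{diag}(I_p,T)\U_0^*\,\mathrm{diag}(I_p,P^{-1})\,\U_0\,\mathrm{diag}(I_p,T^*)$ and $K^{-1/2}=\kV$ guaranteeing consistency with the previous proposition. The bookkeeping you flag (unitarity of $\U_0$ from (\ref{OutputNormal}) and the reduction of the Schur complement to (\ref{Z})) does close exactly as you expect, so there is no gap.
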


 Let $\Lambda$, $\Pi$ be $p\times p$ unitary matrices  
and $\Sigma$ be a $\delta\times \delta$ unitary matrix.
Noting that 
$(\Lambda Z \Lambda^*)^{1/2}=\Lambda Z ^{1/2}\Lambda^*$,
it is easily verified that  
\begin{eqnarray*} 
\lefteqn{\tau(W,\Lambda U,\Pi V)}\\
&&= \left(\begin{block}{cc}
\Lambda & 0\\
0 & I_\delta
\end{block}\U\begin{block}{cc}
\Lambda^* & 0\\
0 & I_\delta
\end{block},\begin{block}{cc}
\Pi & 0\\
0 & I_\delta
\end{block}\V\begin{block}{cc}
\Pi^* & 0\\
0 & I_\delta
\end{block}\right),
\end{eqnarray*}
so that the  $J$-lossless function $\hat\Theta_{W,U,V}$ also satisfies
(\ref{thetaprop}) and (\ref{LFTprop}).

We also have that
\begin{eqnarray*} 
\lefteqn{\tau(\Sigma^*W\Sigma,U\Sigma,V\Sigma)}\\
&&= \left(\begin{block}{cc}
I_p & 0\\
0 & \Sigma^*
\end{block}\U\begin{block}{cc}
I_p & 0\\
0 & \Sigma
\end{block},\begin{block}{cc}
I_p & 0\\
0 & \Sigma^*
\end{block}\V\begin{block}{cc}
I_p & 0\\
0 & \Sigma
\end{block}\right),
\end{eqnarray*}
so that 
\begin{equation}
\label{similarity}
\hat\Theta_{\Sigma^*W\Sigma,U\Sigma,V\Sigma}=\hat\Theta_{W,U,V}.
\end{equation}

\begin{corollary}
A unitary matrix realization $\widetilde R$ of $\widetilde G
={T}_{\hat\Theta_{W,U,V}}(G)$ can be computed from a unitary matrix
realization  $R$ of $G(z)$ by (\ref{statespacerecursion2}) in which $\U$ and
$\V$ are the unitary matrices
$(\U,\V)=\tau(W,U,V)$ defined  in Proposition \ref{tau}.
\end{corollary}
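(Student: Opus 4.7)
The plan is to reduce the statement to two ingredients that have already been established: Proposition~\ref{tau}, which gives $\hat\Theta_{W,U,V}=\Phi_{\U,\V}$ for $(\U,\V)=\tau(W,U,V)$, and the state-space proposition preceding it, which shows that (\ref{statespacerecursion2}) defines the mapping $G\mapsto T_{\Phi_{\U,\V}}(G)$. Combining these, the quadruple $(\widetilde D,\widetilde C,\widetilde B,\widetilde A)$ produced by (\ref{statespacerecursion2}) is automatically a realization of order $\delta+k$ of $\widetilde G=T_{\hat\Theta_{W,U,V}}(G)$, so the only thing that remains to be verified is the unitarity of the resulting realization matrix $\widetilde R$.

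For the unitarity, I would factor $\widetilde R$ as the product of the three matrices on the right-hand side of (\ref{statespacerecursion2}). The outer factors $\mathrm{diag}(\U,I_k)$ and $\mathrm{diag}(\V^*,I_k)$ are unitary since $\U$ and $\V$ are, by the construction in Proposition~\ref{tau}. For the middle factor
\[
\begin{block}{ccc} D & 0 & C\\ 0 & I_\delta & 0\\ B & 0 & A \end{block},
\]
I would observe that it is obtained from the block-diagonal matrix $R\oplus I_\delta$ (with block sizes $(p,k,\delta)$) by applying the same permutation that swaps the second and third blocks on both the rows and the columns, producing the ordering $(p,\delta,k)$. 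Such a symmetric permutation is a unitary conjugation, so the middle factor is unitary whenever $R$ is. Being a product of three unitary matrices, $\widetilde R$ is itself unitary.

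The only step that requires any care is the block-permutation identification of the middle factor with a conjugate of $R\oplus I_\delta$; once this bookkeeping is spelled out, the corollary is essentially a direct restatement of the two preceding propositions, and no substantial obstacle remains.
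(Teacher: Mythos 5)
Your argument is correct and is exactly the route the paper intends: the corollary is stated without proof precisely because it follows by combining Proposition~\ref{tau} (which identifies $\hat\Theta_{W,U,V}$ with $\Phi_{\U,\V}$) with the earlier proposition showing that (\ref{statespacerecursion2}) realizes $T_{\Phi_{\U,\V}}$, while unitarity of $\widetilde R$ is immediate from the product structure since the middle factor is a block-permutation conjugate of $R\oplus I_\delta$. Your explicit bookkeeping of that permutation is a harmless elaboration of what the paper leaves implicit.
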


{\bf Remark.}  Instead of $\U$ and $\V$, we may have chosen
\begin{eqnarray*}
 \hat\U&=& \begin{block}{cc}
I_p & 0 \\
0 & O_1
\end{block}\U
\begin{block}{cc}
H_1 & 0 \\
0 & O_2
\end{block}\\
\hat\V&=& \begin{block}{cc}
I_p & 0 \\
0 & O_1
\end{block}\V
\begin{block}{cc}
 H_2 & 0 \\
0 &O_2
\end{block},
\end{eqnarray*}
in which $O_1$, $O_2$, $H_1$ and $H_2$ are  unitary matrices:\\
(1) the matrix $O_1$ corresponds to the choice of  $O_1P^{1/2}$ instead of
$P^{1/2}$.  This choice leave the linear fractional transformation unchanged
($\Phi_{\hat\U,\hat\V}=\Phi_{\U,\V}$), 
but  changes the realization 
  $\widetilde R=(\widetilde A,\widetilde B,\widetilde C,\widetilde D)$ into
$(O_1\widetilde AO_1^*,O_1\widetilde B,\widetilde CO_1^*,\widetilde D)$,  a
  similar one.\\
(2) the choice of $\kV=(\tV^*\tV+I_p)^{-1/2}O_2$ instead
of  $(\tV^*\tV+I_p)^{-1/2}$  has no effect.\\
(3) The unitary matrices $H_1$ and $H_2$ correspond to another completion of the first columns of
$\V$ and $\U$. This choice  
changes the linear fractional transformation since
\[\Phi_{\hat\U,\hat\V}=\Phi_{\U,\V}\begin{block}{cc}
 H_1 & 0 \\
0 &H_2
\end{block}\]
and thus produces a non-similar realization.

{\bf Remark.}
Note that if $V=0$, since the pair $(U,W)$ satisfies 
$U^*U+W^*W=I_\delta$,
we have that $P=I_\delta$ and   $T=I_\delta$ too. Thus, $\U=\U_0$ defined by
(\ref{U0}) and $\V=I_{p+\delta}$,
so that  the recursion (\ref{statespacerecursion2}) becomes
\begin{equation}
\label{v=0}
\left[\begin{array}{c|c}
\widetilde D & \widetilde C\\
\hline 
\widetilde B & \widetilde A
\end{array}\right]=
\left[\begin{array}{c|cc}
\MU D & U  & \MU C\\
\hline
\betaU^* D &  W &\betaU^* C \\
B& 0 & A
\end{array}\right].
\end{equation}

\section{Charts from a Schur algorithm.}

Recall that a manifold  is a topological space that looks locally like the
"ordinary" Euclidean space ${\mathbb R}^N$: near every point of the space, we have a coordinate system or
chart.  The number $N$ is the dimension of the manifold. 
It has been proved in \cite[Th.2.2]{ABG} that $\cL_n^p$  is a
smooth manifold of dimension $p^2+2np$ 
embedded in the Hardy space $H_q^{p\times p}$, for $1\leq q \leq \infty$  ($\cR\cL_n^p$
is a smooth manifold of dimension  $\frac{p(p-1)}{2}+np$). The topology  on
$\cL_n^p$ is that induced by the $L^q$ norm on  $H_q^{p\times
  p}$.  In \cite{ABG} atlases of charts have been constructed from a Schur
algorithm associated with Nevanlinna-Pick interpolation. We generalize this
construction to the case of Nudelman interpolation.

Let $\sigma=\left((U_1,W_1),~(U_2,W_2),\ldots,(U_l,W_l)\right)$
 be a sequence  of  output normal  pairs (see section \ref{ExplicitFormulas}),
$W_j$ is $n_j\times n_j$,  $U_j$ is $p\times n_j$, and
\[\sum_{j=1}^{l}n_j=n.\]
From a given lossless function $G(z)$ of degree $n$, 
 a sequence  of lossless functions of decreasing degree $G_l(z)=G(z),G_{l-1}(z),\ldots
$ can be constructed following a Schur algorithm:  assume that
$G_j(z)$ has been constructed  and put   
\[V_j=\frac{1}{2i\pi}\int_{\bf T} G_j^\sharp(z) U_j 
\left(z\,I_{n_j}- W_j\right)^{-1} dz. \]
If the solution $P_j$ to the symmetric Stein equation 
\[P_j-W_j^*P_jW_j=U_j^*U_j-V_j^*V_j\]
is positive definite, then from Theorem \ref{SolNudelman}, a 
lossless function $G_{j-1}(z)$  is defined by
\[G_j=T_{\hat\Theta_{W_j,U_j,V_j}}(G_{j-1}).\]
If $P_j$ is not positive definite, the construction stops.

A chart $(\cD,\phi)$ of $\cL_n^p$ is
attached with a sequence $\sigma$ of output normal pairs and with a chart
$(\cW,\psi)$ of $\UU(p)$ as follows: \\
A function $G(z)\in\cL_n^p$ belongs to the domain $\cD$ of the chart if  the
Schur algorithm allows to construct a complete sequence of 
lossless functions,
 \[G(z)=G_l(z), G_{l-1}(z)\ldots,G_0,\]
where  $G_0$ is a constant lossless matrix in $\cW\subset \UU(p)$.\\
The local coordinate map $\phi$ is defined by 
\[\phi: G(z)\in\cD \to \left(V_1,V_2,\ldots,V_l,\psi(G_0)\right),\]
and the  interpolation matrices $V_j$ are called the {\em Schur parameters} of the
function in the chart.
 
\begin{theorem}
A family of charts $(\cD,\phi)$ defines  an atlas of $\cL_n^p$ provided 
the union of their domains covers $\cL_n^p$.  
\end{theorem}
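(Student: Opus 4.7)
The plan is to verify the two defining axioms of a smooth atlas: each $(\cD,\phi)$ is a homeomorphism onto an open subset of a Euclidean space of the correct dimension, and on any overlap the transition maps $\phi'\circ\phi^{-1}$ are smooth.

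First I would establish that $\cD$ is open in $\cL_n^p$ and that $\phi$ is smooth. At each step of the Schur algorithm the matrix $V_j = \frac{1}{2i\pi}\int_{\bf T} G_j^\sharp(z) U_j (zI_{n_j}-W_j)^{-1} dz$ is analytic in the coefficients of the rational matrix $G_j$, since $W_j$ is stable and the integrand depends analytically on those coefficients. The admissibility condition $P_j>0$ is an open condition on $V_j$, and under it Proposition \ref{tau} together with its Corollary express the inverse linear fractional transformation $G_{j-1}=T_{\hat\Theta_{W_j,U_j,V_j}}^{-1}(G_j)$ through a fixed algebraic formula in $V_j$ and in a balanced realization of $G_j$. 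Combining these steps and using that $\cW$ is open in $\UU(p)$ by hypothesis, one sees that $\cD$ is open and that $\phi$ is smooth on $\cD$.

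Next I would construct a smooth inverse for $\phi$ using the balanced realization recursion of Section \ref{BalancedRealizations}. Given coordinates $(V_1,\ldots,V_l,\psi(G_0))$ in the image, set $G_0=\psi^{-1}(\cdot)\in\cW$, whose $p\times p$ realization matrix is $G_0$ itself, and iteratively build $G_j$ from $G_{j-1}$ by applying (\ref{statespacerecursion2}) with $(\U_j,\V_j)=\tau(W_j,U_j,V_j)$. The explicit formulas in Section \ref{ExplicitFormulas} make $\tau$ smooth in $V_j$ wherever admissibility holds, since only inversions, positive square roots and rational expressions enter. By Theorem \ref{SolNudelman} combined with the Corollary to Proposition \ref{tau}, the resulting $\hat G=G_l$ is a lossless function of McMillan degree $n$ that satisfies the Nudelman interpolation condition (\ref{Nudelman}) at level $l$ with data $(W_l,U_l,V_l)$; iterating one sees $\phi(\hat G)=(V_1,\ldots,V_l,\psi(G_0))$. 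Counting real dimensions, $\sum_j 2pn_j+p^2=2pn+p^2$, matches the dimension of $\cL_n^p$.

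Given that each $\phi$ and its inverse are smooth on their respective domains, transition maps $\phi'\circ\phi^{-1}$ are smooth on any overlap as compositions of smooth maps, and the covering hypothesis completes the definition of an atlas. The principal obstacle lies in the second step: verifying that the purely algebraic reconstruction really returns a lossless function of McMillan degree exactly $n$ whose Schur parameters match the input. The key point is that the normalization adopted in Proposition \ref{tau} is precisely the one that makes $\hat\Theta_{W_j,U_j,V_j}=\Phi_{\U_j,\V_j}$, so the state-space step (\ref{statespacerecursion2}) is identical to the interpolation step $T_{\hat\Theta_{W_j,U_j,V_j}}$; the admissibility of each data set then guarantees the strict degree increase required by Theorem \ref{SolNudelman}, preventing any hidden cancellations.
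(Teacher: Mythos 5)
Your proposal is correct and follows essentially the same route as the paper, which defers to the proof of Th.~3.5 in \cite{ABG} and rests on exactly the two facts you verify: the domain $\cD$ is open (admissibility $P_j>0$ being an open condition on the smoothly varying $V_j$) and $\phi$ is a diffeomorphism because $\hat\Theta_{W,U,V}$, equivalently $\tau(W,U,V)$, depends smoothly on $V$. Your expansion of the inverse map via the balanced realization recursion and the identification $\hat\Theta_{W_j,U_j,V_j}=\Phi_{\U_j,\V_j}$ is precisely the mechanism the paper has set up in Sections \ref{BalancedRealizations} and \ref{ExplicitFormulas}, so no new idea is introduced or missing.
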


\begin{proof}
The proof is analogous to the proof of Th.3.5 in \cite{ABG}. The domain $\cD$ of a
chart is open and  the map $\phi$ is a diffeomorphism. This relies on the
fact that $\hat\Theta_{W,U,V}$ depends smoothly on the entries of $V$.
\end{proof}

Atlases for the quotient $\cL_n^p/\UU(p)$ are obtained using the properties 
(\ref{thetaprop}) and (\ref{LFTprop}). If $G(z)$ has Schur parameters
$\left(V_1,V_2,\ldots,V_l\right)$ and constant unitary matrix $G_0$ in a
given chart, and  if $\Pi\in \UU(p) $, then $G(z)\Pi^*$ has Schur parameters 
$\left(\Pi V_1,\Pi V_2,\ldots,\Pi V_l\right)$ and constant unitary matrix
$G_0\Pi^*$ in the same chart. The quotient can be performed within a chart by
imposing the last  
constant lossless matrix $G_0$ in the Schur algorithm to be the identity
matrix.  

In a chart, a  balanced realization $R$ of $G(z)=\phi^{-1}(V_1,V_2,\ldots,  
\psi(G_0))$ can be  computed  from the parameters using 
 the Schur sequence: let $R_0=G_0$, a realization $R_j$  of $G_j(z)$ is 
obtained from a realization $R_{j-1}$ of $G_{j-1}(z)$ by
formula (\ref{statespacerecursion2})  in which
$(\U,\V)=(\U_j,\V_j)=\tau(W_j,U_j,V_j)$ (see Proposition \ref{tau}). This
process allows to select for each $G(z)$ in the domain of the chart 
a unique balanced realization within the
equivalence class, and then the map
\[R\to (V_1,V_2,\ldots,
\psi(G_0))\]
is a canonical form. However, the domain of this canonical form is not
easily characterized and it is in general difficult to decide if a given
realization is in canonical form with respect to a chart. This can be done in 
some particular situations that will be studied in the
following section.

\section{Some particular atlases.}
We describe three atlases which  all present some 
interest from the optimization viewpoint. The first one is for complex
functions and it involves only Schur steps in which the degree is increased by
one. It allows for a search strategy of local minima by induction on the
degree, which can be very helpful in some difficult optimization
problems. The second one is the analog for real-valued functions. The third
one involves only one Schur step and provides very simple and natural 
canonical forms.

A chart in which all the Schur
parameters $V_j$ are zero  matrices for $G(z)$ is called an  {\em adapted
  chart} for $G(z)$. Such a chart presents a great interest from
an optimization viewpoint. The optimization process  starts, in an adapted
chart, at the origin  and thus far from the boundary where a change
of chart is necessary. For each atlas,  a simple method  to find an
adapted chart is given.

\subsection{An atlas for $\cL_n^p$ (complex lossless functions)}
\label{atlasPHO}
Consider the charts associated with  sequences of output normal pairs 
$(u_1,w_1),~(u_2,w_2),\ldots,(u_n,w_n)$ in which the $w_j$'s are complex
numbers.
In this case, the Nudelman
interpolation condition (\ref{Nudelman}) can be rewritten as a
Nevanlinna-Pick interpolation  condition 
\[ G(1/{\bar w_j})^*u_j=v_j.\]
This is the atlas described in \cite{PHO}. However, the normalization
conditions differ. In \cite{PHO} the $p$-vectors
$u_j$ have norm one, while in this work, the pairs $(u_j,w_j)$ are output
normal  
(\ref{OutputNormal}).  

{\bf Remark.} Note that,  when $n_j>1$  the normalization condition  $U_j$
unitary  (a possible  generalization of  $\|u_j\|=1$) 
cannot be chosen since the matrix  $U_j^*U_j$ can be singular.  

In view of (\ref{v=0}), an adapted chart can be computed from a realization
in Schur form. 

\begin{lemma} Let $\widetilde G(z)\in \cL_n^p$ and 
 let $\widetilde R=(\widetilde A,\widetilde B,\widetilde C,\widetilde D)$ be  a balanced
realization  of  $G(z)$ in Schur form ($\widetilde A$ upper triangular). Let 
\[\widetilde A=\left[\begin{array}{cl} w & a^*\\
                                0    & A
\end{array}\right],~~~ 
{\widetilde B}=\left[\begin{array}{l} b^* \\ B
\end{array}\right],~~~{\widetilde C}=\left[\begin{array}{cc} u & \hat C                                                    
\end{array}\right],\]
where $w\in\CC$, $u,~b\in\CC^p$,   and $a\in\CC^{n-1}$. 
Then, $\widetilde G=T_{\hat\Theta_{w,u,0}}(G)$ for some 
lossless function $G(z)$. A realization $R$ of $G(z)$ 
  can be computed by reverting
(\ref{statespacerecursion2}). It is still in Schur form 
and given by   $R=(A,B,C,D)$,  where
\begin{eqnarray*}
C&=&\hat C+(1-w)^{-1} u a^*\\
D&=&\widetilde D+(1-w)^{-1} u b^*.
\end{eqnarray*}
\end{lemma}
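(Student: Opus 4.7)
The plan is to specialize the reduction (\ref{v=0}) to the scalar case $\delta=1$, $W=w$, $U=u$, $V=0$---where $\U=\U_0$ is given by (\ref{U0})--(\ref{XY}) and $\V=I_{p+1}$, as recalled in the remark following the Corollary---and then invert the block identities to recover $D$ and $C$.

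Reading off the nine block entries of (\ref{v=0}) gives $B$ and $A$ directly as the trailing rows and bottom-right block of $\widetilde B$ and $\widetilde A$; the scalar $w$ and vector $u$ are prescribed by the interpolation data; and the remaining entries yield the four coupling equations
\[
\widetilde D = XD, \quad \hat C = XC, \quad b^* = YD, \quad a^* = YC,
\]
with $X = I_p - u(1-\bar w)^{-1}u^*$ and $Y = (1-w)(1-\bar w)^{-1}u^*$ as read from (\ref{XY}).

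From $b^* = YD$ one finds $u^*D = (1-\bar w)(1-w)^{-1}b^*$; substituting into $\widetilde D = D - u(1-\bar w)^{-1}u^*D$ isolates $D = \widetilde D + (1-w)^{-1}u\,b^*$. The identical manipulation applied to $a^* = YC$ and $\hat C = XC$ yields $C = \hat C + (1-w)^{-1}u\,a^*$. This inversion is well defined because $w$, being an eigenvalue of the stable matrix $\widetilde A$, satisfies $|w|<1$ and in particular $1-w\neq 0$.

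Finally, defining $G(z) = D + C(zI_{n-1}-A)^{-1}B$ with the recovered $D$ and $C$ and feeding $(A,B,C,D)$ back into (\ref{v=0}) reproduces $(\widetilde A,\widetilde B,\widetilde C,\widetilde D)$ exactly, so $\widetilde G = T_{\hat\Theta_{w,u,0}}(G)$; the losslessness of $G$ is then consistent with the converse part of Theorem \ref{SolNudelman}. The Schur form of $R$ is inherited from $\widetilde A$, since $A$ is a principal submatrix of an upper triangular matrix. The argument is essentially bookkeeping around the explicit structure (\ref{XY}) of $X$ and $Y$; the only mildly subtle point is the non-vanishing of $1-w$, handled by Schur stability.
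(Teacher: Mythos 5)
The paper states this lemma without proof, so there is nothing to compare against; judged on its own, your proposal identifies the right mechanism (specialize the $V=0$ recursion (\ref{v=0}) to $\delta=1$ and invert it) and arrives at the correct formulas, but it has a genuine gap at its central step. The four ``coupling equations'' $\widetilde D = XD$, $\hat C = XC$, $b^* = YD$, $a^* = YC$ are not given: their simultaneous solvability is precisely the existence claim to be proved. You derive $D=\widetilde D+(1-w)^{-1}ub^*$ as a \emph{necessary} consequence of the pair $\{\widetilde D=XD,\ b^*=YD\}$ and then assert that feeding this $D$ back into (\ref{v=0}) ``reproduces $(\widetilde A,\widetilde B,\widetilde C,\widetilde D)$ exactly.'' That is not an algebraic identity and fails for generic data: with this $D$ one gets
\[
YD=(1-w)(1-\bar w)^{-1}u^*\widetilde D+(1-\bar w)^{-1}(u^*u)\,b^*,
\]
which equals $b^*$ only if $u^*u=1-|w|^2$ and $u^*\widetilde D=-\bar w\,b^*$ (and the $C$-equations likewise need $u^*\hat C=-\bar w\,a^*$). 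These are exactly the orthonormality relations satisfied by the $(p+1)$-st column of the unitary realization matrix $\widetilde R$; your argument never invokes the unitarity of $\widetilde R$, which is the hypothesis that makes the inversion consistent. The losslessness of $G$ is likewise only declared ``consistent with'' Theorem \ref{SolNudelman}, which is not a proof.

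Both gaps close at once if you work at the level of realization matrices. Since $u^*u+|w|^2=1$, the matrix $\U_0$ of (\ref{U0})--(\ref{XY}) built from $(u,w)$ is unitary, and reverting (\ref{statespacerecursion2}) with $\V=I_{p+1}$ amounts to forming the unitary product
\[
\left[\begin{array}{cc}\U_0^*&0\\0&I_{n-1}\end{array}\right]\widetilde R .
\]
Its $(p+1)$-st row is $[\,u^*\widetilde D+\bar w b^*,\ u^*u+|w|^2,\ u^*\hat C+\bar w a^*\,]=[\,0,\ 1,\ 0\,]$ by the orthonormality of the columns of $\widetilde R$; being unitary, the product then also has $e_{p+1}$ as its $(p+1)$-st column, hence equals $\left[\begin{array}{ccc}D&0&C\\0&1&0\\B&0&A\end{array}\right]$ with $\left[\begin{array}{cc}D&C\\B&A\end{array}\right]$ unitary. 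This proves existence of $G$, shows the reverted realization is again balanced (hence $G$ is lossless), and gives $D=X^*\widetilde D+Y^*b^*$, $C=X^*\hat C+Y^*a^*$, which reduce to the stated formulas upon substituting $u^*\widetilde D=-\bar w b^*$ and $u^*\hat C=-\bar w a^*$. Your remarks that $1-w\neq 0$ by stability and that $A$ inherits the triangular structure are correct.
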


This process can  be repeated. It  provides  a sequence
of output normal  pairs $(u_j,w_j)$, the $w_j$'s being the  eigenvalues of
$\widetilde A$. In the corresponding  chart   the Schur
parameters of $G(z)$ are the zero $p$-vectors $v_n=\ldots=v_1=0$.

\subsection{An atlas for $\cR\cL_n^p$ (real lossless functions)}
To deal with real functions we  consider the charts associated with
sequences of output normal pairs  $(U_1,W_1)$, $(U_2,W_2),\ldots, (U_n,W_n)$
 in which the $W_j$'s are 
either real numbers or real $2\times 2$ matrices with complex conjugate
eigenvalues and the $U_j$'s are real matrices. The parameters (the matrices
$V_j$) are then restricted to be {\em real}.

As previously,  an adapted chart for a given lossless function
$\widetilde G(z)\in\cR\cL_n^p$  can be obtained  from a realization in real Schur form.

\begin{lemma} Let $\widetilde G(z)\in \cR\cL_n^p$ and 
 let $\widetilde R=(\widetilde A,\widetilde B,\widetilde
C,\widetilde D)$ be a  balanced realization of $G(z)$ in {\em real
  Schur form}
\[\widetilde A=
\left[\begin{array}{cccc}
W_l&\star & \cdots & \star \\
0 & W_{l-1} & \ddots & \vdots\\
\vdots&\ddots &\ddots &\star\\
0&\cdots&0& W_1
\end{array}\right],
\]
where for $j=1,\ldots,l$, $W_j$ is either a real number or a $2\times 2$
block with  
complex conjugate eigenvalues.
Let 
\[\widetilde A=\left[\begin{array}{cl} W_l & \hat A^*\\
                                0    & A
\end{array}\right],~~~ \widetilde C=\left[\begin{array}{cc}U_l & \hat C
  \end{array}\right],~~~{\widetilde B}=\left[\begin{array}{l} \hat B^* \\ B
\end{array}\right],\] 
where $U_l$ and $\hat B$ are $p\times n_l$, $n_l$ being the size of $W_l$. 
Then, $\widetilde G=T_{\hat\Theta_{W_l,U_l,0}}(G)$ for some 
lossless function $G(z)$. A realization $R$ of $G(z)$ 
  can be computed by reverting
(\ref{statespacerecursion2}). It is in Schur form 
and given by   $R=(A,B,C,D)$,  where
\begin{eqnarray*}
C&=&\hat C+(I_{n_l}-W_l)^{-1} U_l \hat A^*\\
D&=&\widetilde D+(I_{n_l}-W_l)^{-1} U_l \hat B^*.
\end{eqnarray*}
\end{lemma}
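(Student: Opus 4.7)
The plan is to recognize that the hypothesized block decomposition of $\widetilde R$ is exactly the right-hand side of the specialized state-space recursion~(\ref{v=0}) applied with data $(W_l, U_l, 0)$, so one can simply invert that recursion to extract $R$ and read off the claimed closed-form expressions for $C$ and $D$.

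First I would verify that $(W_l, U_l, 0)$ is an admissible, output-normal Nudelman data set. Unitarity of $\widetilde R$ gives $\widetilde A^*\widetilde A + \widetilde C^*\widetilde C = I_n$, and reading the top-left $n_l \times n_l$ block while using the zero below $W_l$ (the real Schur structure) yields $W_l^* W_l + U_l^* U_l = I_{n_l}$, which is precisely output normality. Observability of $(U_l, W_l)$ follows from minimality of $\widetilde R$ together with the block-upper-triangular form of $\widetilde A$, while stability of $\widetilde A$ ensures stability of $W_l$ and invertibility of $I_{n_l} - W_l$. With $V = 0$ the Stein equation has solution $P = I_{n_l} > 0$, so admissibility holds.

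Next I would specialize Proposition~\ref{tau} to $V = 0$. The formulas of Section~\ref{ExplicitFormulas} collapse: $T = I_{n_l}$, $\kV = I_{n_l}$, $\V = I_{p+n_l}$, and $\U$ reduces to the matrix $\U_0$ of~(\ref{U0}). Hence the general recursion~(\ref{statespacerecursion2}) becomes the explicit form~(\ref{v=0}), asserting that for any unitary realization matrix $R = (A,B,C,D)$ of a lossless $G$, the image $\widetilde R$ has precisely the block pattern assumed in the lemma. This immediately yields $\widetilde G = T_{\hat\Theta_{W_l, U_l, 0}}(G)$ via Proposition~2, and identifies $A$ and $B$ directly as the lower-right sub-blocks of $\widetilde A$ and $\widetilde B$.

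It remains to invert~(\ref{v=0}) for $C$ and $D$. Block-matching produces the four equations $X C = \hat C$, $Y C = \hat A^*$, $X D = \widetilde D$, $Y D = \hat B^*$, with $X, Y$ defined by~(\ref{XY}); these are overdetermined but consistent because the unitarity of $\U_0$ gives $X^* X + Y^* Y = I_p$, and the remaining compatibility conditions such as $U_l^* \widetilde D + W_l^* \hat B^* = 0$ follow from unitarity of $\widetilde R$ itself. Combining the identity $X + U_l (I_{n_l} - W_l^*)^{-1} U_l^* = I_p$ (a rewriting of the definition of $X$) with $(I_{n_l} - W_l^*)^{-1} U_l^* D = (I_{n_l} - W_l)^{-1} Y D = (I_{n_l} - W_l)^{-1} \hat B^*$ yields the closed form $D = \widetilde D + U_l (I_{n_l} - W_l)^{-1} \hat B^*$, with an identical derivation for $C$. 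Unitarity of $R$ is then automatic, since $\widetilde R$ factors through the block-unitary $\mathrm{diag}(\U_0, I_{n - n_l})$, and $A$ inherits the real Schur form from $\widetilde A$. The only delicate step is the consistency and explicit solution of the overdetermined system for $C$ and $D$, where the output-normality identity on $(U_l, W_l)$ does the real work.
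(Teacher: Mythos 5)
Your proof is correct and follows exactly the route the paper intends: the lemma is the inverse of the specialized recursion~(\ref{v=0}), and the paper offers no further argument beyond pointing to that formula, so your verification of output normality, of the consistency of the overdetermined system via unitarity of $\widetilde R$ (e.g.\ $U_l^*\widetilde D+W_l^*\hat B^*=0$), and of the closed forms for $C$ and $D$ supplies precisely the missing details. Note that your derivation yields $D=\widetilde D+U_l(I_{n_l}-W_l)^{-1}\hat B^*$ and $C=\hat C+U_l(I_{n_l}-W_l)^{-1}\hat A^*$, which is the dimensionally correct ordering; the lemma's printed $(I_{n_l}-W_l)^{-1}U_l\hat B^*$ only makes sense (and agrees with yours) when $n_l=1$, so for $2\times 2$ blocks $W_l$ the statement contains a typo and your version is the right one.
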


Repeating this process, we 
get a sequence of output normal pairs $(U_j,W_j)$, the $W_j$'s being the
diagonal blocks of 
$\widetilde A$, that index a chart in which the Schur
parameters (which are now matrices of different sizes) 
of $G(z)$ are all zero matrices.

{\bf Remark.} The Schur algorithm attached with $G(z)$ in such an
adapted chart  yields
a Potapov factorization  for real lossless functions
\[G(z)=B_l(z) B_{l-1}(z)\cdots B_1(z),\]
where $B_j$ is the real-valued lossless  function
\begin{eqnarray*}
\lefteqn{B_j(z)=}\\
&& I_p-(z-1)U_j(z\,I_{n_j}-W_j)^{-1}(I_{n_j}-W_j^*)^{-1}U_j^*.
\end{eqnarray*}

\subsection{Lossless mutual encoding.}
\label{atlasPaulo}
For this atlas, we consider  the charts associated with a single output
normal pair 
$(U,W)$ in which $W$ is $n\times n$ and $U$ is $p\times n$. In this 
case, a  solution to (\ref{Nudelman}) can be  directly characterized in state
space form and 
formula (\ref{statespacerecursion2}) recovered independently from the Schur algorithm.  

\begin{proposition}
Let $G(z)=D +C (z I_n -A)^{-1} B$ be a balanced realization of  $G(z)\in
\cL_n^p$.   Let $Q$ be the unique solution to the Stein equation 
\begin{equation}
\label{Q}
Q - A^* Q W = C^* U.
\end{equation}
Then,  the interpolation value $V$  in (\ref{Nudelman}) and the solution $P$ to
(\ref{Stein})  are given by
\begin{equation}
\label{Vcasen}
V=D^* U + B^* Q W,
\end{equation}
\begin{equation}
\label{PQ}
P=Q^*Q.
\end{equation}
The unitary realization matrix $
R=\left[ \begin{array}{cc}  D & C \\  B & A \end{array}
  \right]$  of $G(z)$ can be computed as 
\[R=\hat\U \begin{block}{cc}
G_0 & 0\\
0 & I_n
\end{block}\hat\V^*,~~~G_0\in\UU(p),\]
 in which  $\hat\U$ and $\hat\V$ are unitary matrices   given as in
 Proposition \ref{tau} 
by (\ref{matU}) and (\ref{matV}), where $\widetilde U$, $\widetilde
V$, $\widetilde W$ are given by (\ref{UVWtilde}),  $K, L, Z$  by 
(\ref{K}),(\ref{L}),(\ref{Z}) and $X, Y$ by (\ref{XY}), but in which {\em the square root 
 $T$ of $P$  is now chosen to be $Q$}.
\end{proposition}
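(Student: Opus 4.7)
The plan is to establish the three assertions in order, since $V$ and $P$ enter the realization formula through the definitions of $\hat\U$ and $\hat\V$.

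For (\ref{Vcasen}) I would evaluate the defining contour integral directly. On $|z|=1$ one has $G^\sharp(z) = D^* + z B^*(I_n - z A^*)^{-1}C^*$; expanding $(I_n - zA^*)^{-1} = \sum_{j\geq 0}z^j A^{*j}$ (which converges since $\rho(A)<1$) and $(zI_n - W)^{-1} = \sum_{k\geq 0}z^{-k-1}W^k$, only Laurent coefficients of total exponent $-1$ survive the integration. The $D^*$ term contributes $D^*U$; in the remaining double sum, recognizing $\sum_{j\geq 0} A^{*j}C^*U\,W^j$ as the Neumann-series solution of $Q - A^*QW = C^*U$ produces $B^*QW$. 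For $P = Q^*Q$ I would then verify that $Q^*Q$ solves the Stein equation (\ref{Stein}), so that uniqueness (available since $W$ is stable) identifies it with $P$. Expanding $Q^*Q - W^*Q^*QW$ via $Q = C^*U + A^*QW$ and applying $AA^* + BB^* = I_n$ to $A^*QWW^*Q^*A$, then $DB^* = -CA^*$ with $A^*QW = Q - C^*U$ to eliminate the $B$-terms, and finally $U^*U + W^*W = I_n$ together with $CC^* = I_p - DD^*$ and the formula for $V$, the right-hand side collapses to $U^*U - V^*V$.

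For the realization formula I would invoke the first remark following Proposition~\ref{tau}: the square-root construction admits any matrix $T$ with $T^*T = P$, different choices producing balanced realizations of the same $G$ that are pairwise similar. With $P = Q^*Q$ now available, $T = Q$ is a legitimate choice, and it remains to see that the corresponding realization is exactly $R$. I would compute $\hat\U^* R\hat\V$ and show that its $n\times n$ lower-right block equals $I_n$; since $\hat\U^* R \hat\V$ is unitary as a product of unitary matrices, this single identity forces the off-diagonal blocks to vanish and the upper-left block to be some $G_0\in\UU(p)$, yielding the claimed factorization. Substituting $\alpha_u = UQ^{-1}\kV$, $\kU = QWQ^{-1}\kV$, and $\alpha_v = VQ^{-1}\kV$ reduces the lower-right block to
\[
\kV\, Q^{-*}\bigl[(U^*D + W^*Q^*B)\,VQ^{-1} + (U^*C + W^*Q^*A)\bigr]\kV.
\]
The first parenthesized factor is $V^*$ (adjoint of the formula for $V$) and the second is $Q^*$ (adjoint of the Stein equation for $Q$), so the bracket collapses to $(P + V^*V)Q^{-1}$; the identity $Q^{-*}(P+V^*V)Q^{-1} = I_n + \tilde V^*\tilde V = \kV^{-2}$ then delivers $I_n$.

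The main obstacle is this last step: one has to spot the Stein equation and the formula for $V$ in their adjoint form inside an initially cumbersome expression, and one must invoke the change-of-square-root remark to legitimize the non-Hermitian choice $T = Q$. The economy of checking only the lower-right block and letting unitarity supply the rest is what keeps the calculation tractable.
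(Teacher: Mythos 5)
Your proof is correct and takes essentially the same route as the paper: the integral is evaluated by the same Neumann-series expansion, the identity $P=Q^*Q$ rests on the same unitarity relations for $R$ (the paper merely packages your entry-wise verification as the single matrix identity $\Upsilon^*\Upsilon=\U_0^*\,\mathrm{diag}(I_p,Q^*Q)\,\U_0$), and the factorization of $R$ is obtained exactly as you propose, by showing that a product of unitary matrices has identity lower-right block (the paper equivalently compares the second block columns of $R^*\hat\U$ and $\hat\V$). Only cosmetic slips: the term to which $AA^*+BB^*=I_n$ applies is $W^*Q^*AA^*QW$, not $A^*QWW^*Q^*A$, and output normality $U^*U+W^*W=I_n$ is not actually needed in that verification.
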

{\bf Proof.}
Since $G^\sharp(z)=D^*+B^*\left(\frac{1}{z}I_n-A^*\right)^{-1}C^*$,  the  contour integral (\ref{Nudelman})
can be computed as 
\begin{eqnarray*}
V&=&\frac{1}{2i\pi}\int_{\bf T} G^\sharp(z)  U 
\left(z\,I_n- W\right)^{-1} dz
\\
V&=&\frac{1}{2i \pi}\int_{\bf T}D^*\,U\left(\sum_{j=0}^\infty (z^{-1}
  W)^{j}\right) \\
&&+B^* z \left(\sum_{j=0}^\infty (zA^*)^{j}\right) C^*\,U 
\left(\sum_{j=0}^\infty z^{-j} W^{j}\right)\frac{dz}{z},\\
&=& D^*U+B^*\left(\sum_{j=0}^\infty (A^*)^jC^*UW^j\right) W.
\end{eqnarray*}
 Since $Q$ is given by the convergent series
$Q=\sum_{j=0}^\infty (A^*)^jC^*UW^j,$ (\ref{Vcasen}) is satisfied. 

Consider a unitary completion of the column $\begin{block}{c}
  U\\W\end{block}$, for example the matrix $\U_0 $ given by 
(\ref{U0}). 
Here, $\U_0$ and the unitary realization matrix $R$ have the same size and formulas (\ref{Q}) and
(\ref{Vcasen}) can be rewritten in a matrix form 
\begin{equation}
\label{Upsilon}
R^*
\begin{block}{cc}
I_p & 0 \\
0 & Q 
\end{block} \U_0=
\begin{block}{cc}
\star & V \\
\star  & Q 
\end{block} = \Upsilon.
\end{equation}

The matrix $R$ being unitary, we have that 
\[\Upsilon^*\Upsilon=\U_0^*\begin{block}{cc}
I_p & 0 \\
0 & Q^*Q
\end{block}
\U_0=\begin{block}{cc}
\star & \star\\
\star & V^*V+Q^*Q
\end{block},\]
so that
$U^*U+W^*Q^*QW=V^*V+Q^*Q,$ 
which proves (\ref{PQ}).

We shall use the computations of section \ref{BalancedRealizations},
with $\delta=n$  and $T=Q$ {\em instead of the Hermitian positive square root}
$P^{1/2}$.
 From (\ref{Upsilon}) we get 
\[
R^*
\begin{block}{cc}
I_p & 0 \\
0 & Q 
\end{block} \U_0 \begin{block}{cc}
I_p & 0 \\
0 & Q^{-1} 
\end{block}=
\begin{block}{cc}
\star & \widetilde V \\
\star  & I_p
\end{block},\]
and if $\hat\U$ is the unitary matrix given by (\ref{matU}),
\[R^*\hat\U=\begin{block}{cc}
\star & \widetilde V K^{-1/2}\\
\star  & K^{-1/2}
\end{block}.\]
This matrix is unitary and its second block column coincides with that of
$\hat\V$ 
given by (\ref{matV}). Thus it must be $\hat\V$ up to a right
unitary 
factor of the form 
$\begin{block}{cc} G_0^* & 0\\ 0 & I_p \end{block}$,
for some unitary matrix $G_0$.  
\hfill$\Box$

{\bf Remark.} Note that $\Phi_{\hat\U,\hat\V}=\Phi_{\U,\V}$ ($\U, \V$ defined
in Proposition
\ref{tau}) and thus $G_0$ is 
the  constant unitary matrix  in the Schur algorithm 
$G=T_{\hat\Theta_{W,U,V}}(G_0)$.

The domain of a chart and of the associated canonical form can then be easily
characterized. 

\begin{proposition}
A lossless function $G(z)$, given by a balanced realization $(A,B,C,D)$
 can be parametrized in the chart defined by the pair $(U,W)$ if and only if
the solution $Q$ to the Stein equation
(\ref{Q}) is positive definite. 
A realization $\tilde R$ is  in canonical form with respect to this chart 
if and only if the solution  $Q$ to (\ref{Q})
is $P^{1/2}$,  $P$ being a solution of (\ref{Stein}).
\end{proposition}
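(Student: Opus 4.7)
My plan is to deduce both equivalences directly from the preceding proposition, which did the substantive work by establishing (i) the identity $P = Q^*Q$ relating the solutions of (\ref{Stein}) and (\ref{Q}), and (ii) the decomposition $R = \hat\U \begin{block}{cc} G_0 & 0 \\ 0 & I_n \end{block} \hat\V^*$ in which $\hat\U, \hat\V$ are built from the formulas of Proposition \ref{tau} with the square root $T$ of $P$ taken to be $Q$.

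For the first equivalence, I would observe that $G(z)$ lies in the domain of the chart iff the triple $(W,U,V)$ with $V$ given by (\ref{Vcasen}) is an admissible Nudelman data set, i.e.\ $P>0$. Via $P = Q^*Q$, this is equivalent to invertibility of $Q$; since positive-definiteness of a Hermitian $Q$ forces $P = Q^2 > 0$ and, conversely, $P>0$ allows one to pick $Q = P^{1/2}$ which is positive definite, parametrizability is equivalent to $Q$ being positive definite. For the second equivalence, the canonical realization attached to the chart is obtained by applying (\ref{statespacerecursion2}) with $(\U,\V) = \tau(W,U,V)$, which by Proposition \ref{tau} fixes $T = P^{1/2}$. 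The preceding proposition yields the same decomposition for an arbitrary balanced realization $\widetilde R$ but with $T = Q$; hence $\widetilde R$ coincides with the canonical realization (for some unitary $G_0$) iff $Q = P^{1/2}$.

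The main subtlety is that $Q$, a priori just the unique solution of the Stein equation (\ref{Q}), need not be Hermitian; the canonical-form condition $Q = P^{1/2}$ pins down both its Hermitian-positive nature and the similarity class of realizations it represents. That this $Q$ nevertheless plays the role of a valid (possibly non-Hermitian) square root of $P$ throughout the state-space decomposition is precisely the non-trivial content of the preceding proposition, and with it the two stated equivalences follow with essentially no further computation.
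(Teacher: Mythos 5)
Your overall strategy --- reading both equivalences off the preceding proposition, which established $P=Q^*Q$ and the factorization $R=\hat\U\,\mathrm{diag}(G_0,I_n)\,\hat\V^*$ built with $T=Q$ --- is exactly what the paper intends: it gives no explicit proof and presents the statement as an immediate consequence of that proposition. Your handling of the second equivalence is essentially correct: the canonical realization is the one produced by (\ref{statespacerecursion2}) with $T=P^{1/2}$, the given balanced realization is the one produced with $T=Q$, and since $Q=O_1P^{1/2}$ for a unitary $O_1$ (polar decomposition, using $Q^*Q=P$), the two realizations differ by the similarity $O_1$; observability of the realization then forces $O_1=I_n$, i.e.\ $Q=P^{1/2}$. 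You could make that last (standard) injectivity step explicit, but it is not a real gap.

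The first equivalence, however, contains a step that fails as written: ``conversely, $P>0$ allows one to pick $Q=P^{1/2}$''. The matrix $Q$ is the \emph{unique} solution of the Stein equation (\ref{Q}) determined by the given realization $(A,B,C,D)$ and the pair $(U,W)$; it cannot be picked, and it need not be Hermitian --- a point you yourself flag in your closing remark, which makes the ``conversely'' clause inconsistent with it. What $P=Q^*Q>0$ gives you directly is only that $Q$ is \emph{invertible}. To upgrade invertibility to positive definiteness you must invoke the one degree of freedom actually available, namely the choice of balanced realization within its unitary equivalence class: replacing $(A,B,C,D)$ by $(OAO^*,OB,CO^*,D)$ replaces $Q$ by $OQ$, and the polar decomposition lets you choose $O$ so that $OQ=(Q^*Q)^{1/2}=P^{1/2}>0$. (Alternatively, one may read ``positive definite'' as ``invertible'', which is the realization-independent condition and the one the paper itself appeals to in the remark immediately following the proposition, on the invertibility of $Q$ measuring the quality of the chart.) Without one of these repairs, the ``only if'' direction of your first equivalence fails for a balanced realization whose $Q$ is invertible but not Hermitian positive, since parametrizability is a property of $G$ alone.
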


The invertibility of the matrix $Q$ is a good measure of the quality of the
chart, the best choice being $Q=I_n$. This choice provides an adapted chart.

\begin{proposition}
The chart associated with the output normal pair $(C,A)$  in a balanced
realization $(A,B,C,D)$ of $G(z)$ is an adapted chart for $G(z)$.
\end{proposition}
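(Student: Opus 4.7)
The plan is to apply the preceding proposition with the particular choice $(U,W)=(C,A)$ and verify that the unique Schur parameter $V$ produced by the Schur algorithm vanishes, which is exactly the definition of an adapted chart in this atlas (where the Schur sequence has length one).

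First I would check that $(C,A)$ is admissible as the index of a chart. Since $(A,B,C,D)$ is a minimal realization, $(C,A)$ is observable; and since $(A,B,C,D)$ is a balanced realization of a lossless function, the realization matrix $R=\begin{block}{cc}D & C\\ B & A\end{block}$ is unitary, so $R^*R=I_{p+n}$. Reading off the lower-right $n\times n$ block of this identity yields $C^*C+A^*A=I_n$, i.e.\ $(C,A)$ is output normal in the sense of (\ref{OutputNormal}); in particular $A$ is stable.

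Next I would instantiate the previous proposition with $(U,W)=(C,A)$. The Stein equation (\ref{Q}) becomes
\[Q-A^*Q A=C^*C,\]
which, combined with the fact that $A$ is stable, has $Q=I_n$ as its unique solution (indeed the observability Gramian of the balanced realization). Since $Q=I_n$ is trivially positive definite, the chart is well-defined at $G(z)$, and formula (\ref{PQ}) gives $P=Q^*Q=I_n$, which is positive definite as required.

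Finally I would compute the interpolation value from (\ref{Vcasen}):
\[V=D^*C+B^*\,I_n\,A=D^*C+B^*A.\]
But the off-diagonal block of the unitarity relation $R^*R=I_{p+n}$ reads precisely $D^*C+B^*A=0$, so $V=0$. Hence the unique Schur parameter of $G(z)$ in the chart indexed by $(C,A)$ is zero, which is the definition of an adapted chart. There is no real obstacle here; the only subtle point is noticing that balanced-ness of the realization simultaneously gives the output normality of $(C,A)$, a closed-form solution to the Stein equation, and the vanishing of the interpolation value, so that everything the proposition needs is delivered by a single reading of the blocks of $R^*R=I_{p+n}$.
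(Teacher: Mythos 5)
Your proposal is correct and takes essentially the same route as the paper, whose entire proof is the one-line observation that $Q=I_n$, hence $P=I_n$ and $V=0$; you have merely supplied the verifications (output normality of $(C,A)$, $Q=I_n$ solving the Stein equation, and $D^*C+B^*A=0$) that the paper leaves implicit, all read off from the unitarity of the realization matrix exactly as intended.
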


\begin{proof}
In this case,  $Q=I_n$, so that $P=I_n$ and $V=0$.
\end{proof}

{\bf Remark.} Let $(\cD,\phi),~\phi:G(z)\to (V,G_0)$
 be the chart associated with the output normal
pair $(U,W)$. Then the chart associated with 
the pair $(U\Sigma,\Sigma^*W\Sigma)$
has same domain $\cD$ and by (\ref{similarity}) 
coordinate map $\phi^\prime:G(z)\to(V\Sigma,G_0)$.
In an atlas, these two  charts play the same role. 

{\bf Remark.}
Equivalence classes of output normal
pairs are in bijection 
with lossless functions in  $\cL_n^p/\UU(p)$  \cite[Cor.2.1]{ABG}. 
The unitary completion  $\U_0$  of the matrix  $\begin{block}{c}
  U\\W\end{block}$ in (\ref{U0}) defines a lossless function 
\begin{equation}
\label{Omega}
\Omega(z)=X+U(zI_n-W)^{-1}Y\in\cL_n^p.
\end{equation}
The canonical form associated with the pair $(U,W)$ depends on this 
completion and 
  is in fact attached with an element of $\cL_n^p/\UU(p)$. This explains why
  this section was called lossless mutual encoding. 

\section{Application to system identification and control.}

 The first application that we consider is  the identification of
 hyperfrequency filters, made of coupled resonant cavities, that are used in
 telecommunication satellites for channel multiplexing. The problem is to
 recover the transfer function of the filter from frequency data. These data are 
 estimate values of the transfer function at  pure imaginary points 
  obtained from the steady-state outputs of the filter to  harmonic
 inputs. A first stage, far from being trivial, consists in computing a
stable matrix transfer function of high order which agrees with the data. 
It is achieved by the software Hyperion, 
also developed at INRIA (\cite{RTCNES}).
Then a rational $L^2$ approximation stage
is performed by the software RARL2\footnote{The software RARL2 is described 
in \cite{CDC02} and available at the web page \\
{\tt \small http:www-sop.inria.fr/apics/RARL2/rarl2-eng.html.}}, in which the
atlas of section \ref{atlasPHO} 
is used. Transfer functions of these filters are  complex functions since  a
particular transformation has been  used to simplify  the model. 
 In  Figure \ref{bodeCNES},
a 8th order model of a MIMO
$2\times 2$ hyperfrequency filter is shown, obtained from 800 pointwise
data. 
\begin{figure}[!hbt]
\begin{center}
\includegraphics[width=7.5cm]{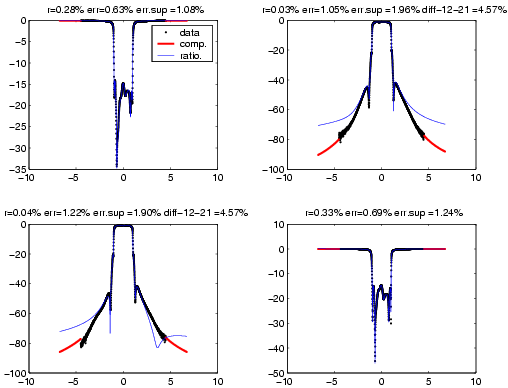}
\end{center}
\caption{CNES $2\times 2$ hyperfrequency filter:
 data and approximant at order 8 (Bode diagram).}
\label{bodeCNES}
\end{figure}
A longstanding cooperation with the space agency CNES resulted in a dedicated
software PRESTO-HF that wraps both HYPERION and RARL2 into a package which is
now fully integrated in the design and tuning process. 

This application and more generally filter design, raises interesting 
new parametrization issues. The physical laws of energy conservation and
reciprocity introduce subclasses of transfer functions which play an
important role in this domain.  These include $J$-inner, Schur (or
contractive), positive real and symmetric functions. In another connection,  
 systems having a particular state space form  must be  handled to
account for some physical properties, like for example the coupling geometry
of a filter. 
We think that Schur analysis could help us to describe
such subclasses and to pave the bridge between the frequency domain (where
specifications are made) and the state-space domain (where the design
parameters live). As a first step in this way,  a  Schur algorithm for
symmetric 
lossless functions, based on  a  two-sided Nudelman interpolation condition,
has 
been presented in \cite{O-CDC}.

 The atlas of section 
\ref{atlasPaulo} has been recently implemented in RARL2, the rational $L^2$
approximation software.  Its effectiveness
has been demonstrated on random 
systems and classical examples from the literature.  A promising  field of
application, in which functions are 
real-valued,  is  multi-objective control.  
 In \cite{Scherer00}, revisited in a chain-scattering
perspective in \cite{O-IFAC}, it is shown that if  the pair
$(C_Q,A_Q)$  of the Youla parameter $Q(z)=D_Q+C_Q(z I-A_Q)^{-1}B_Q$ 
is fixed, then the  search over the parameters $(B_Q,D_Q)$ can be 
 reduced to  an efficiently solvable  LMI problem. Limiting the search of
 the parameter $Q(z)$ to the FIR form
\[Q(z)=Q_0+Q_1\frac{1}{z}+\ldots+Q_p \frac{1}{z^p}\]
provides  solutions to the multi-objective control problem. 
However, this is also the main limitation of the approach as high order
expansions might be necessary, due notably to the fact that the poles
structure is fixed through the pair $(C_Q,A_Q)$. 
Such a drawback  could be avoided if the search was performed over all
the  parameters
$Q(z)$ of  fixed McMillan degree. This could be done    using the
atlas  of  section \ref{atlasPaulo} to parametrize 
the corresponding pairs  $(C_Q,A_Q)$.    This work is currently  under
investigation and will be reported later.

\bibliography{Nudelman}

\end{document}